\newcounter{ctr}
\newtheorem{definition}[ctr]{Definition}
\newtheorem{theorem}[ctr]{Theorem}
\newtheorem{proposition}[ctr]{Proposition}
\newtheorem{lemma}[ctr]{Lemma}
\newtheorem{corollary}[ctr]{Corollary}
\newcommand{\T}{\ensuremath{\mathbb{T}}}
\newcommand{\K}{\ensuremath{\mathscr{K}}}
\newcommand{\U}{\ensuremath{\mathscr{U}}}
\newcommand*{\R}{\ensuremath{\mathbb{R}}}
\renewcommand*{\S}{\ensuremath{\mathcal{S}}}
\newcommand*{\N}{\ensuremath{\mathbb{N}}}
\newcommand*{\C}{\ensuremath{\mathbb{C}}}
\newcommand*{\supp}{\ensuremath{\mathrm{supp\,}}}
\newcommand*{\dist}{\ensuremath{\mathrm{dist\,}}}
\renewcommand*{\Re}{\ensuremath{\mathrm{Re\,}}}
\newcommand{\eps}{\varepsilon}
\begin{document}

\title[Weak stationary solutions]{Weak solutions to the stationary incompressible Euler equations}

\author{A.~Choffrut}
\address{Maxwell Institute {\bf \&} School of Mathematics, University of Edinburgh, Edinburgh EH9 3JZ, UK}
\email{antoine.choffrut@ed.ac.uk}

\author{L.~Sz\'ekelyhidi Jr.}
\address{Institut f\"ur Mathematik, Universit\"at Leipzig, D-04103 Leipzig}
\email{laszlo.szekelyhidi@math.uni-leipzig.de}

\date{\today}            

\begin{abstract}
We consider weak stationary solutions to the incompressible Euler equations and show that the analogue of the
$h$-principle obtained by the second author in joint work with C.~De Lellis for time-dependent weak solutions in $L^\infty$ continues to hold.
The key difference arises in dimension $d=2$, where it turns out that the relaxation is strictly smaller 
than what one obtains in the time-dependent case.
\end{abstract}

\maketitle

%\tableofcontents

\bigskip

\hrule

\bigskip

\section{Introduction}
It is well-known since the work of V.~I.~Arnold that the Euler equations in 2 dimensions for ideal fluids exhibit a very rich geometric structure. This arises from the interpretation of the Euler equations as the equations of geodesics on the space of volume-preserving diffeomorphisms. In particular, coupled with the fact that in 2d the vorticity is transported by the flow, one obtains, at least formally, a very explicit geometric picture as a space of diffeomorphisms foliated by distributions of vorticities, and on each single leaf the equation can be thought of as a Hamiltonian system. 

A first step towards an analytic verification of this formal picture was taken in \cite{CHOFFRUT-SVERAK} for stationary solutions, i.e.~solutions of the system
\begin{equation}
(v\cdot \nabla) v+\nabla p=0\,,\qquad {\rm div}\,v=0\,.
\label{eq:stationary.Euler}
\end{equation}
Under some non-degeneracy assumptions it was shown that locally near each stationary solution there exists a manifold of stationary solutions transversal to the foliation. In analytical terms this amounts to an implicit function theorem, showing that there is locally a one-to-one correspondence between leaves of the foliation and solutions of \eqref{eq:stationary.Euler}. This is the geometric picture in the class of \emph{smooth} solutions of \eqref{eq:stationary.Euler}. 

In this short note we would like to explore an entirely different scenario, namely the picture suggested by Gromov's h-principle as applied to fluid mechanics in \cite{Bulletin}, implying that there is an abundant set of \emph{weak} stationary solutions in the neighbourhood of any smooth stationary solution. The fact that weak forms of the h-principle apply to the non-stationary Euler equations has been discovered in \cite{DL-Sz:Euler-as-differential-inclusion}, see also the survey \cite{Bulletin}.
Our main result is the following
\begin{theorem}\label{theorem:main.theorem}
Let $d\geq 2$ and $v_0$ a smooth stationary Euler flow on $\mathbb T^d$,
and consider a smooth function $e(x)>|v_0(x)|^2$ for $x\in\mathbb T^d$.
Then, for every $\sigma>0$, there exist infinitely many weak stationary flows
$v\in L^\infty(\mathbb T^d;\mathbb R^d)$
such that $|v(x)|^2=e(x)$ for a.e. $x\in\mathbb T^d$ and $\|v-v_0\|_{H^{-1}(\mathbb T^d)}<\sigma$.
\end{theorem}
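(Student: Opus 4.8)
\emph{Reformulation.} The plan is to recast \eqref{eq:stationary.Euler} as a differential inclusion and run the convex integration scheme of \cite{DL-Sz:Euler-as-differential-inclusion}, adjusting the geometry to the absence of a time variable. Setting $u=v\otimes v-\frac1d|v|^2\Id$ and $q=p+\frac1d|v|^2$, the system \eqref{eq:stationary.Euler} becomes the \emph{linear} system $\div u+\nabla q=0$, $\div v=0$ on $\T^d$ together with the pointwise constraint $(v(x),u(x))\in K_{e(x)}$, where for $r>0$
\[
K_r:=\bigl\{(v,u)\in\R^d\times\S_0^{d\times d}\ :\ u=v\otimes v-\tfrac1d|v|^2\Id,\ |v|^2=r\bigr\}
\]
and $\S_0^{d\times d}$ denotes the symmetric trace-free matrices. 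The \emph{wave cone} of the linear operator is
\[
\Lambda:=\bigl\{(\bar v,\bar u,\bar q)\ :\ \exists\,\xi\in\R^d\setminus\{0\},\ \bar v\cdot\xi=0,\ \bar u\xi+\bar q\xi=0\bigr\};
\]
the structural point is that, unlike the time-dependent case, the missing ``time row'' forces $\bar u\xi$ to be \emph{parallel} to $\xi$, i.e.\ $\xi$ must be an eigenvector of $\bar u$.

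\emph{The relaxation and the starting subsolution.} I would next determine the $\Lambda$-convex hull $\mathcal U_r$ of $K_r$ (or at least an open, bounded, $\Lambda$-convex subset of it). For $d\ge3$ I expect to recover the time-dependent hull $\{(v,u):\lambda_{\max}(v\otimes v-u)<r/d\}$; for $d=2$ the eigenvector constraint forces $\bar v$, when nonzero, to be an eigenvector of $\bar u$ as well, which genuinely thins out the admissible plane waves and strictly shrinks the hull --- this is the phenomenon announced in the abstract. The properties I need of $\mathcal U_r$ are: it is open, bounded, $\Lambda$-convex and monotone in $r$; $(v,u)\in\overline{\mathcal U_r}$ with $|v|^2=r$ implies $(v,u)\in K_r$; $K_r$ sits at positive distance from $\partial\mathcal U_{r+\eta}$ for each $\eta>0$; and --- the geometric core --- through every $(v,u)\in\mathcal U_r$ there is a segment in a direction $(\bar v,\bar u)\in\Lambda$, contained in $\mathcal U_r$, with $|\bar v|\ge c(r,\eta)\,(r-|v|^2)$ whenever $|v|^2\le r-\eta$. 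Since $\min_x\bigl(e(x)-|v_0(x)|^2\bigr)>0$, the smooth fields $v_0$, $u_0:=v_0\otimes v_0-\frac1d|v_0|^2\Id$, $q_0:=p_0+\frac1d|v_0|^2$ solve the linear system and, by monotonicity, $(v_0(x),u_0(x))\in\mathcal U_{e(x)}$ with a uniform margin; this provides a nonempty set of strict subsolutions to start from.

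\emph{Correctors, perturbation lemma, Baire category.} To convert an element $(\bar v,\bar u,\bar q)\in\Lambda$ with frequency $\xi$ into an admissible oscillation I need smooth $\T^d$-periodic fields $(v_\lambda,u_\lambda,q_\lambda)$ solving the linear system, supported in a prescribed ball in $x$, oscillating at frequency $\lambda\xi$ with leading amplitude $\asymp(\bar v,\bar u,\bar q)$, and with $\|v_\lambda\|_{H^{-1}}\to0$ as $\lambda\to\infty$: one uses a first-order potential $v_\lambda=\mathrm{curl}(\cdots)$ for $\div v_\lambda=0$ and a second-order potential for the symmetric divergence-free field $u_\lambda+q_\lambda\Id$ (in $d=2$ the Airy stress function $\phi\mapsto(\partial_{22}\phi,-\partial_{12}\phi;-\partial_{12}\phi,\partial_{11}\phi)$, in general its analogue), truncated by a fixed bump; the cutoff error is lower order and absorbed for $\lambda$ large. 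Now let $X_0$ be the set of smooth strict subsolutions $(v,u,q)$ with $\|v-v_0\|_{H^{-1}}<\sigma$ and $(v(x),u(x))\in\mathcal U_{e(x)}$ for all $x$, and $X:=\overline{X_0}$ in the $H^{-1}$ metric; since subsolutions are uniformly bounded in $L^\infty$ (as $\overline{\mathcal U_r}$ is bounded), $X$ is bounded and weak-$L^2$ closed, hence a compact --- in particular complete --- metric space on which the $H^{-1}$ and weak-$L^2$ topologies agree. The functional $I(v):=\int_{\T^d}\bigl(e(x)-|v(x)|^2\bigr)\,dx\ge0$ is weak-$L^2$ upper semicontinuous on $X$, hence of Baire class one, so its set $X_c$ of continuity points is residual in $X$. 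Using the correctors, the standard localization-and-summation argument (partition $\T^d$ into small cubes, on each add an oscillation in a $\Lambda$-direction adapted to the value of $(v,u)$ there) gives the perturbation lemma: if $(v,u,q)\in X_0$ with $I(v)\ge\delta>0$ then there is $(v',u',q')\in X_0$ with $\|v'-v\|_{H^{-1}}$ arbitrarily small and $I(v')\le I(v)-\beta\delta^2$, $\beta=\beta(e)>0$, the gain $\int|v_\lambda|^2\gtrsim\int(e-|v|^2)^2\ge I(v)^2/|\T^d|$ coming from the lower bound on $|\bar v|$. A routine contradiction at any continuity point then forces $I\equiv0$ on $X_c$, i.e.\ $|v(x)|^2=e(x)$ a.e., and for such $v$ the triple $(v,u,q)$ is a weak stationary Euler flow with $\|v-v_0\|_{H^{-1}}\le\sigma$; since $X$ is perfect (every subsolution is an $H^{-1}$-limit of genuinely oscillating ones), $X_c$ is uncountable, giving infinitely many solutions.

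\emph{Main obstacle.} The crux is the relaxation step in dimension $d=2$: identifying the (strictly smaller) $\Lambda$-convex hull and establishing the lower bound $|\bar v|\ge c(r-|v|^2)$ on the available $\Lambda$-directions, since the eigenvector constraint makes the admissible plane waves far more rigid than in the time-dependent setting; one must check that this rigidity still leaves enough room to pump energy into $|v|^2$ while keeping the given $v_0$ a strict subsolution. The remaining ingredients --- the potentials and the Baire scheme --- are a direct transcription of \cite{DL-Sz:Euler-as-differential-inclusion}.
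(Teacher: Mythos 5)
Your outline reproduces the standard convex-integration skeleton (reformulation as the differential inclusion $w(x)\in\K_{e(x)}$ for stationary subsolutions, localized plane-wave potentials, the Baire-category/continuity-point argument, and the monotonicity condition guaranteeing that $(v_0,u_0)$ is a strict subsolution), and this part matches the paper. But the proposal has a genuine gap, and it is exactly the part you defer to the ``main obstacle'' paragraph: the construction of an admissible relaxation $\U_r$ and the proof of the perturbation property, which is the actual content of the paper. For $d\geq 3$ this is short (one checks that the admissible segment directions $(a-b,\,a\otimes a-b\otimes b)$ with $|a|^2=|b|^2=r$ lie in the stationary wave cone by choosing $\eta\perp a,b$, so the time-dependent hull $\mathrm{int}\,\K_r^{co}$ still works), and you could plausibly fill it in; but for $d=2$ nothing in your text actually produces a set $\U_r$ with the properties you list, and your intended route --- ``determine the $\Lambda$-convex hull of $\K_r$'' and then find through every point a single $\Lambda$-segment with $|\bar v|\geq c\,(r-|v|^2)$ --- is not what can be carried out as stated: the paper never computes $\K_r^{lc}$ (and explicitly states that the analogue of the quasiconvex hull is unknown), and it does not rely on a single-segment quantitative bound at all.

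What the paper does instead in $d=2$ is an explicit construction: it introduces the rotation-invariant slice $L=\{\Im\zeta=0\}$ in the complex coordinates $(z,\zeta)$, defines the open set $V_r=\{f_r<1\}$ with $f_r(a+ib,c)=\frac{\sqrt r|a|}{r/2+c}+\frac{\sqrt r|b|}{r/2-c}$, rotates it to $\mathcal V_r$ and sets $\U_r:=\mathcal V_r^{lc}$; the perturbation property is then verified not via one long segment but via \emph{laminates of finite order} (up to fourth order) supported on $\K_{r'}\cap L$ with $r'<r$ arbitrarily close to $r$, which give the energy gain $\int|z-z_0|^2\,d\nu\geq r'-|z_0|^2$ directly, and monotonicity $\overline{V_{r'}}\subset V_r$ yields property (*). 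Moreover, the paper proves (Section on failure of (P)) that in $d=2$ the naive relaxation $\mathrm{int}\,\K_r^{co}$ genuinely fails the perturbation property, via a measure-valued/div-curl (commutation) argument along the boundary of the convex hull --- so some construction of a strictly smaller relaxed set is unavoidable, and your proposal does not supply one. As it stands, your argument proves the theorem only modulo this missing core; the remaining ingredients you describe are indeed a direct transcription of the time-dependent theory.
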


\smallskip

Theorem~\ref{theorem:main.theorem}
should be seen as the natural counterpart to the h-principle obtained
in \cite{DL-Sz:Euler-as-differential-inclusion}
for $L^\infty$-solutions to  the {\it non-stationary} ({\it i.e.} time-dependent) Euler equations
$$\partial_tv+(v\cdot\nabla) v+\nabla p=0\,,\qquad {\rm div}\,v=0\,.$$
It turns out, however, that the methods that have been introduced for the non-stationary case do not directly transfer to the stationary case in 2d. In technical terms, the relaxation set obtained when passing from solutions to subsolutions is strictly smaller than the convex hull. 
%More precisely, assumption (H2) in \cite{SZEKELYHIDI:SISSA-lectures} is not satisfied by the set $\mathscr U_r$ defined in (\ref{e:Ur}).
%Thus, the stationary Euler equations in two dimensions are not covered by earlier work.
See Section \ref{s:wavecone} for a precise formulation. This observation resembles the rigidity results obtained by A.~Shnirelman \cite{Shnirelman} concerning the geometry of measure-preserving homeomorphisms in the 2d versus the much more flexible 3d case. 

On the other hand, for dimensions $d\geq 3$ one can essentially retain the framework developed in \cite{DL-Sz:Euler-as-differential-inclusion,DL-Sz:admissibility}.

\bigskip

We remark, that the approximation in Theorem \ref{theorem:main.theorem} can be taken in any negative Sobolev norm.
Recall also that $v\in L^\infty(\mathbb T^d;\mathbb R^d)$ is a \emph{weak solution}
to (\ref{eq:stationary.Euler})
if
$$\int_{\mathbb T^d} v\otimes v : \nabla \Phi\,dx=0\,\qquad \int_{\mathbb T^d} v\cdot \nabla f\,dx=0$$
for every divergence-free vector field $\Phi\in C^\infty(\mathbb T^d;\mathbb R^d)$ 
and every scalar function $f\in C^\infty(\mathbb T^d)$. Finally, concerning the pressure we note that, using the equation
$\Delta p=-\textrm{div}[(v\cdot\nabla)v]=-\textrm{div }\textrm{div }(v\otimes v)$, the pressure $p$ can be recovered using standard estimates as a function $p\in L^q(\T^d)$ for all $q<\infty$. In fact, as in \cite{DL-Sz:Euler-as-differential-inclusion}
one can even construct $p\in L^\infty(\T^d)$, but we will not pursue this further in this paper.
 
\bigskip

We note in passing that in the time-dependent case \cite{DL-Sz:continuous-Euler-flows, ISETT, B-DL-Sz} has lead to solutions with H\"older regularity, a question that has been the focus of interest in view of Onsager's conjecture on anomalous dissipation in turbulence. However, the methods of \cite{DL-Sz:continuous-Euler-flows, DL-Sz:Holder, ISETT, B-DL-Sz} do not apply to the stationary case.
Indeed, a very delicate part in these proofs is to use the transport operator
$\partial_t+v\cdot\nabla$ to absorb the main (linear) part of the error in the iteration.
Although the stationary case is not directly related to Onsager's conjecture, 
there is a natural analogue of the problem for H\"older-continuous stationary flows \cite{Cheskidov-Shvydkoy}.

\section{The reformulation as a differential inclusion}
\label{section:general.considerations}

Our proof of Theorem~\ref{theorem:main.theorem} is based on the convex integration framework for the Euler equations, as
developed in \cite{DL-Sz:Euler-as-differential-inclusion}. For the convenience of the reader we recall the setting in this section, specializing on the time-independent case.

We denote by 
$$
\S^d_0=\left\{u\in\mathbb R^{d\times d}~\colon~u^\top = u,\quad {\rm tr}\,u=0\right\}
$$
the set of symmetric, trace-free $d\times d$-matrices. By $|u|$ we shall mean the operator norm of $u\in\S^d_0$.
The following is elementary.

\begin{lemma}\label{lemma:linearization}
Let $d\geq 2$.
Let $e\in C(\mathbb{T}^d)$ be a positive function.
Suppose $v\in L^\infty(\T^d;\R^d)$, $u\in L^\infty(\T^d;\S^{d}_0)$,
and $q\in \mathcal{D}'(\T^d)$ a distribution solve weakly
\begin{equation}
{\rm div}\,u+\nabla\,q=0,\qquad{\rm div}\,v=0\,.
\label{e:linear}
\end{equation}
If 
\begin{equation}\label{e:pointwise}
u=v\otimes v-\frac{e}d\,{\rm Id}\quad\textrm{  a.e. in $\T^d$},
\end{equation}
then $v$ and $p:=q-\frac{e}d$ solve (\ref{eq:stationary.Euler})
weakly, and $|v(x)|^2=e(x)$ for a.e. $x\in\mathbb{T}^d$.
\end{lemma}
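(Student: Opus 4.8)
The plan is to verify directly that the two weak formulations coincide once the pointwise constraint \eqref{e:pointwise} is imposed. The statement is essentially a bookkeeping lemma: it reorganizes the nonlinear term $v\otimes v$ into a symmetric trace-free matrix $u$ plus a scalar correction absorbed into the pressure, so that the quadratically nonlinear Euler system becomes the linear system \eqref{e:linear} together with a pointwise (nonlinear, but non-differential) constraint. Accordingly, I expect no genuine obstacle; the work is in matching the distributional identities carefully.

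First I would recall the definition of a weak solution to \eqref{eq:stationary.Euler} given in the introduction: one needs $\int v\otimes v:\nabla\Phi\,dx=0$ for all divergence-free $\Phi\in C^\infty(\T^d;\R^d)$, and $\int v\cdot\nabla f\,dx=0$ for all $f\in C^\infty(\T^d)$. The second of these is identical to the weak form of ${\rm div}\,v=0$, which is part of the hypothesis \eqref{e:linear}, so that condition transfers immediately. For the first, I would substitute \eqref{e:pointwise}: since $\Phi$ is divergence-free, $\int \frac{e}{d}{\rm Id}:\nabla\Phi\,dx=\frac1d\int e\,{\rm div}\,\Phi\,dx=0$, hence $\int v\otimes v:\nabla\Phi\,dx=\int u:\nabla\Phi\,dx$. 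The hypothesis that ${\rm div}\,u+\nabla q=0$ weakly means $\int u:\nabla\Phi\,dx=\int \nabla q\cdot\Phi\,dx$ in the appropriate duality, and the right-hand side vanishes when $\Phi$ is divergence-free (pairing the distribution $\nabla q$ against a smooth solenoidal field gives $-\langle q,{\rm div}\,\Phi\rangle=0$). Thus $\int v\otimes v:\nabla\Phi\,dx=0$, which is the desired identity; note $p=q-e/d$ differs from $q$ only by an $L^\infty$ function, so the pressure gradient in \eqref{eq:stationary.Euler} matches $\nabla q$ up to $\nabla(e/d)$, and this is exactly what the decomposition $u=v\otimes v-\frac ed{\rm Id}$ is designed to reconcile.

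Finally, the claim $|v(x)|^2=e(x)$ a.e. follows by taking the trace of \eqref{e:pointwise}: ${\rm tr}\,u=0$ since $u\in\S^d_0$, ${\rm tr}(v\otimes v)=|v|^2$, and ${\rm tr}\big(\frac ed{\rm Id}\big)=e$, so $0=|v|^2-e$ pointwise a.e. I would also remark that the regularity class is consistent — $v\in L^\infty$ and $u\in L^\infty$ make all the integrals above well-defined, and $q\in\mathcal D'$ suffices for the pairings against smooth test fields — so the argument is complete with only these elementary manipulations and no analytic subtlety to overcome.
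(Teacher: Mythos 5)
Your verification is correct and is exactly the elementary computation the paper has in mind (it omits the proof as ``elementary''): substitute $v\otimes v=u+\tfrac{e}{d}\,\mathrm{Id}$ into the weak formulation, use $\mathrm{div}\,\Phi=0$ to discard both the $\tfrac{e}{d}\,\mathrm{Id}$ term and the pressure distribution, and take the trace of \eqref{e:pointwise} to get $|v|^2=e$ a.e. The only cosmetic point is a sign in ``$\int u:\nabla\Phi\,dx=\int\nabla q\cdot\Phi\,dx$'' (the weak pairing gives $-\int u:\nabla\Phi\,dx=\langle q,\mathrm{div}\,\Phi\rangle$), which is immaterial since that term vanishes for solenoidal $\Phi$; equivalently one can note the one-line distributional identity $\mathrm{div}(v\otimes v)+\nabla p=\mathrm{div}\,u+\nabla q=0$ with $p=q-\tfrac{e}{d}$.
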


We will call a pair $w=(v,u):\T^d\to\R^d\times \S^{d}_0$ a \emph{stationary subsolution}, if there exists a distribution $q\in \mathcal{D}'(\T^d)$ such that the triple $(v,u,q)$ is a weak solution of \eqref{e:linear} (cf. \cite{DL-Sz:admissibility} Section 3.1 and \cite{Bulletin} Section 4). 

Lemma \ref{lemma:linearization} allows us to formulate the problem as a differential inclusion. For any $r>0$ let
\begin{equation}
\mathscr K_r:=\left\{(v,u)\in \R^d\times \S^{d}_0~\colon~
u=v\otimes v-\frac{r}d\,{\rm Id}\right\}\subset \R^{d_*}
\qquad (d\geq 2)
\label{eq:the.constraint.set.K.first.formulation}
\end{equation}
where 
$$
d_*=\frac{d(d+1)}2-1.
$$
Note that for each $r>0$ the set $\K_r$ is a compact, smooth submanifold of $\R^{d_*}$ of dimension $d$. 
A  weak solution to the Euler equations (\ref{eq:stationary.Euler})
with energy profile $e(x)$ is therefore 
(identified with) a subsolution $w=(v,u)$ 
which satisfies the pointwise inclusion
\begin{equation}\label{e:inclusion}
w(x)\in\mathscr K_{e(x)}\qquad {\rm for~ {\it a.e.}}~ x\,.
\end{equation}
The idea is to relax the constraint set $\K_{e(x)}$ in \eqref{e:inclusion} to a suitable nonempty open subset of the convex hull: 
$$
\U_{e(x)}\subset \K_{e(x)}^{co}.
$$  
The key property required of the sets $\U_{r}\subset \K_r^{co}$ is the following, based on the notion of {\it stability of gradients} introduced by B.~Kirchheim in Section 3.3 of \cite{KIRCHHEIM:habilitation} 
(see also \cite{Sz:IPM,SZEKELYHIDI:SISSA-lectures}).

\bigskip

\noindent{\bf Perturbation Property (P): }There is a continuous strictly increasing function $\Phi:[0,\infty)\to[0,\infty)$ with $\Phi(0)=0$ with the following property.
Let $Q=(0,1)^d$ be the open unit cube in $\R^d$.
For every $\bar{w}:=(\bar{v},\bar{u})\in\U_r$ there exists a subsolution $w=(v,u)\in C_c^\infty(Q;\R^d\times\S^d_0)$ with associated pressure $q\in C_c^{\infty}(Q)$ such that
\begin{itemize}
\item $\bar{w}+w(x)\in\U_r$ for all $x\in Q$;
\item $\int_{Q}|w(x)|^2\,dx\geq \Phi(\dist(\bar{w},\K_r))$.
\end{itemize}

\bigskip

In \cite{DL-Sz:admissibility} it was shown that, in the case of the time-dependent Euler equations, 
the perturbation property is satisfied with $\U_r=\textrm{int }\K_r^{co}$, and the convex hull was explicitly calculated
\begin{equation}\label{e:convexhull}
\K^{co}_r=\left\{(v,u)\in \mathbb R^d\times \S^{d}_0~\colon~
v\otimes v-u\leq \frac{r}d\,{\rm Id}\right\}.
\end{equation}
A useful consequence of this formula is that, provided $\bar{w}=(\bar{v},\bar{u})\in\K_r^{co}$, we have
$|\bar{v}|^2=r$ implies $\bar{w}\in\K_r$. Consequently there exists a continuous strictly increasing function $\Psi:[0,\infty)\to[0,\infty)$ with $\Psi(0)=0$ such that
\begin{equation}\label{e:Psi}
\dist(\bar{w},\K_r)\leq \Psi(r-|\bar{v}|^2)\quad\textrm{ for all }\bar{w}=(\bar{v},\bar{u})\in\K_r^{co}.
\end{equation}
Hence, in property (P) we may replace $\dist(\bar{w},\K_r)$ by $r-|\bar{v}|^2$. 

It turns out the the arguments used in \cite{DL-Sz:admissibility} are insufficient to deal analogously with the stationary case - the main reason is that, while the constraint set $\K_r$ is the same in both cases, the associated wave-cone $\Lambda$ (see Section \ref{s:wavecone} below) is smaller in the  stationary case. In fact as a result it turns out that in the 2-dimensional situation (P) is not satisfied with $\U_r=\textrm{int }\K_r^{co}$ (see Section \ref{s:failureofP}).

\bigskip

If property (P) is satisfied for some family of open sets $\U_r$, $r>0$, the by now standard Baire-category argument leads to the existence of a residual set of weak solutions. In order to obtain the precise statement of Theorem \ref{theorem:main.theorem} we require, in addition to (P), the following:
\begin{equation}\label{e:monotone}
\K_r\subset \U_{r'}\quad\textrm{ for }r<r'.
\tag{*}
\end{equation}
Property (*) will ensure that smooth stationary flows belong to the set of subsolutions given by the relaxed set $\U_r$, see Step 2 of the proof of Theorem \ref{theorem:main.theorem}.

We now sketch the argument for the convenience of the reader, but wish to emphasize that this proof is by now standard.

\noindent{\bf Proof of Theorem \ref{theorem:main.theorem}, assuming (P) and \eqref{e:monotone}. }

{\bf Step 1: The functional analytic setup. }
Let $e=e(x)>0$ be a positive smooth function, and define
$$
X_0=\left\{w\in C^\infty(\T^d;\R^d\times\S^d_0):\,w\textrm{ subsolution such that }w(x)\in \mathscr{U}_{e(x)}\textrm{ for all }x\in\T^d\right\}.
$$
It is not difficult to check that $X_0$ is bounded in $L^2(\T^d)$. Indeed, let $\bar{e}=\max_{x\in\T^d}e(x)$ and observe that, if $w(x)=(v(x),u(x))\in \U_{e(x)}\subset \K_{\bar{e}}^{co}$ (using \eqref{e:convexhull}), then
$$
|v|^2\leq \bar{e},\quad |u|\leq 2\bar{e},
$$
and hence $w=(v,u)\in X_0$ implies $\|v\|_{L^\infty}^2,\,\|u\|_{L^\infty}\leq 2\bar{e}$. Standard elliptic estimates and the equation $\textrm{div }\textrm{div }u=-\Delta q$ then imply that $\|q\|_{L^2}\leq C\bar{e}$. See also Lemma 6.5 in \cite{SZEKELYHIDI:SISSA-lectures}. 
We define $X$ to be the closure of $X_0$ in the weak $L^2$ topology (which is metrizable by the boundedness). 

\medskip

{\bf Step 2: $X$ contains smooth stationary flows. }
Let $v_0$ be a smooth solution of \eqref{eq:stationary.Euler} with (smooth) pressure $p_0$ and let $e=e(x)$ be a smooth function
such that $e(x)>|v_0(x)|$ for all $x\in\T^d$. Let 
$$
u_0=v_0\otimes v_0-\frac{|v_0|^2}{d}{\rm Id},\quad q_0=p_0+\frac{|v_0|^2}{d}.
$$
By definition $w_0=(v_0,u_0)$ is a subsolution and 
$$
w_0(x)\in \K_{|v_0|^2(x)}\quad\textrm{ for all }x\in \T^d.
$$
Assumption \eqref{e:monotone} then implies that $w_0(x)\in \U_{e(x)}$ for all $x\in\T^d$, hence
$$
w_0\in X_0.
$$

\medskip

{\bf Step 3: Continuity points of $w\mapsto \int|w|^2\,dx$. }We note that the mapping $w\mapsto \int|w|^2\,dx$ is a Baire-1 map in $X$, hence its continuity points form a residual set in $X$. On the other hand property (P) with an easy covering and rescaling argument leads to the following: there exists a continuous strictly increasing function $\tilde\Phi:[0,\infty)\to[0,\infty)$ with $\tilde\Phi(0)=0$ such that, for every $w\in X_0$ there exists a sequence $w_k\in X_0$ such that 
\begin{itemize}
\item $w_k\rightharpoonup w$ weakly in $L^2(\T^d)$;
\item $\int_{\T^d}|w_k-w|^2\,dx\geq \tilde\Phi\left(\int_{\T^d}\dist(w(x),\K_{e(x)})\right)$.
\end{itemize}
(For instance, one may take $\tilde\Phi$ to be the convex envelope of $\Phi$ - up to rescaling.
See \cite{DL-Sz:admissibility, SZEKELYHIDI:SISSA-lectures,Sz:IPM}).
Consequently, using a diagonal argument and the metrizability of $X$, see \cite{KIRCHHEIM:habilitation,DL-Sz:admissibility, SZEKELYHIDI:SISSA-lectures}, continuity points of the map $w\mapsto \int|w|^2\,dx$ in $X$ are subsolutions $w$ such that $w(x)\in \K_{e(x)}$ for almost every $x\in\T^d$. Since a residual set in $X$ is dense, there exist a sequence $w_k=(v_k,u_k)\in X$ with $w_k(x)\in \K_{e(x)}$ a.e., such that $w_k\rightharpoonup w_0$. In particular this means that $v_k$ is a weak stationary solution of the Euler equations with $|v_k(x)|^2=e(x)$ for a.e. $x\in\T^d$. 

\qed

\bigskip

The rest of the paper is thus devoted to constructing a family of open sets $\U_r$ with the properties (P) and \eqref{e:monotone}.
The perturbation property (P) requires a large class of subsolutions with specific oscillatory behaviour at our disposal. In Section \ref{s:wavecone} we show how such stationary subsolutions can be constructed in general dimension $d\geq 2$, based on the notion of laminates of finite order. Then, in Sections \ref{s:d3} and \ref{s:d2} we will treat separately the cases $d\geq 3$ and $d=2$, respectively.

\section{The wave-cone and laminates}
\label{s:wavecone}

To the linear system \eqref{e:linear}
we associate the \emph{wave cone} $\Lambda$ defined as the set 
\begin{equation}
\Lambda = \biggl\{(\bar{v},\bar{u})\in(\R^d\setminus\{0\})\times \S^{d}_0~\colon~
\exists\,\bar{q}\in\R,\,\eta\in\R^d\setminus\{0\}\textrm{ s.t. }\bar{u}\eta+\bar{q}\eta=0\,, \bar{v}\cdot\eta=0\biggr\}\,.
\label{eq:Lambda.in.any.dimension}
\end{equation}
As in \cite{DL-Sz:Euler-as-differential-inclusion}, this set corresponds to plane-wave solutions of \eqref{e:linear}. Note that, in contrast with the time-dependent case, here we have $\Lambda\neq (\R^d\setminus\{0\})\times \S^{d}_0$ (for the equality in the time-dependent case, see Remark 1 in \cite{DL-Sz:Euler-as-differential-inclusion}).  Nevertheless, we can localize plane-waves by using the same potentials as in the time-dependent case, by simply restricting to potentials which are independent of time. We obtain:

\begin{lemma}\label{l:potential}
Let $d\geq 2$.
Let $\bar{w}=(\bar{v},\bar{u})\in\Lambda$. Then 
\begin{enumerate}
\item $\exists\,\eta\in\R^d\setminus\{0\}$ such that for any $h\in C^{\infty}(\R)$ 
$$
w(x):=\bar{w}h(x\cdot \eta)
$$
is a subsolution;
\item There exists a second order homogeneous linear differential operator $\mathcal{L}_{\bar{w}}$ such that
$$
w:=\mathcal{L}_{\bar{w}}[\phi]
$$
is a subsolution for any $\phi\in C^{\infty}(\R^d)$;
\item Moreover, if $\phi(x)=H(x\cdot\eta)$ for some $H\in C^{\infty}(\R)$, then
$$
\mathcal{L}_{\bar{w}}[\phi](x)=\bar{w}H''(x\cdot\eta).
$$
\end{enumerate}
\end{lemma}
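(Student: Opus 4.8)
The plan is to dispatch (1) by a one-line computation and to deduce (2)--(3) by importing, for each $\bar{w}\in\Lambda$, the potential operators of \cite{DL-Sz:Euler-as-differential-inclusion}, restricted to potentials independent of the time variable; by design these produce subsolutions for arbitrary input potentials. For (1), I would fix $\bar{q}\in\R$ and $\eta\in\R^d\setminus\{0\}$ witnessing $(\bar{v},\bar{u})\in\Lambda$, so that $\bar{u}\eta+\bar{q}\eta=0$ and $\bar{v}\cdot\eta=0$, and for $h\in C^\infty(\R)$ set $v(x)=\bar{v}\,h(x\cdot\eta)$, $u(x)=\bar{u}\,h(x\cdot\eta)$, $q(x)=\bar{q}\,h(x\cdot\eta)$. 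Then $\div v=(\bar{v}\cdot\eta)\,h'(x\cdot\eta)=0$ and $\div u+\nabla q=(\bar{u}\eta+\bar{q}\eta)\,h'(x\cdot\eta)=0$, so $(v,u,q)$ solves \eqref{e:linear} and $w=\bar{w}\,h(x\cdot\eta)$ is a subsolution. Note that, with $h=H''$, this already delivers (3) for potentials of the special form $\phi(x)=H(x\cdot\eta)$.

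For (2)--(3) the idea is to encode the two constraints in \eqref{e:linear} by potentials, handling the two components of $w=(v,u)$ separately. Using $\bar{v}\perp\eta$, I would set
\[
v[\phi]:=\frac{1}{|\eta|^2}\bigl(\bar{v}\,\Delta\phi-\nabla(\bar{v}\cdot\nabla\phi)\bigr),
\]
whose divergence is $\tfrac{1}{|\eta|^2}\bigl((\bar{v}\cdot\nabla)\Delta\phi-\Delta(\bar{v}\cdot\nabla\phi)\bigr)=0$ for every $\phi\in C^\infty(\R^d)$ since $\bar{v}$ is constant; on $\phi(x)=H(x\cdot\eta)$ the second term vanishes because $\bar{v}\cdot\eta=0$, leaving $v[\phi]=\bar{v}\,H''(x\cdot\eta)$. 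For the tensor part I would put $\bar{M}:=\bar{u}+\bar{q}\,\Id$, which by the wave-cone relation is symmetric with $\bar{M}\eta=0$, and take $M[\phi]$ to be the classical stress-function potential for symmetric divergence-free matrix fields --- the Airy function when $d=2$, its double-curl (Saint-Venant) analogue when $d\ge 3$ --- applied to $\phi$ times a suitable constant symmetric tensor, the tensor being chosen so that $M[\cdot]$ is symmetric-matrix valued, $\div M[\phi]=0$ for all $\phi$, and $M[H(x\cdot\eta)]=\bar{M}\,H''(x\cdot\eta)$. This $M[\cdot]$ is exactly the $t$-independent part of the potential used in \cite{DL-Sz:Euler-as-differential-inclusion}.

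With these in hand I would define
\[
\mathcal{L}_{\bar{w}}[\phi]:=\Bigl(v[\phi],\ M[\phi]-\tfrac1d\bigl(\tr M[\phi]\bigr)\,\Id\Bigr),\qquad q[\phi]:=\tfrac1d\,\tr M[\phi],
\]
a second-order homogeneous linear differential operator whose $\S^d_0$-component is symmetric and trace-free. Since $\div v[\phi]=0$ and $\div\bigl(M[\phi]-\tfrac1d(\tr M[\phi])\,\Id\bigr)+\nabla q[\phi]=\div M[\phi]=0$, the field $\mathcal{L}_{\bar{w}}[\phi]$ is a subsolution for every $\phi\in C^\infty(\R^d)$, which is (2). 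Evaluating on $\phi(x)=H(x\cdot\eta)$ and using $\tr\bar{u}=0$, so that $\tfrac1d\tr\bar{M}=\bar{q}$, one obtains $\mathcal{L}_{\bar{w}}[\phi](x)=(\bar{v},\bar{u})\,H''(x\cdot\eta)=\bar{w}\,H''(x\cdot\eta)$, which is (3).

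The only point that is not a routine differentiation is the construction of $M[\phi]$: one must verify that the symbol at the covector $\eta$ of the stress-function potential surjects onto the space of symmetric matrices annihilating $\eta$, so that the free constant tensor can be tuned to reproduce $\bar{M}$. For $d=2$ this is immediate --- both spaces are spanned by $\eta^{\perp}\!\otimes\eta^{\perp}$ and the potential is just the Airy function. For $d\ge 3$ it is an elementary linear-algebra check on the double-curl symbol; conceptually nothing beyond \cite{DL-Sz:Euler-as-differential-inclusion} is required, since $\Lambda$ is precisely the set of elements of the (full) time-dependent wave cone admitting a purely spatial propagation direction $\xi=(\eta,0)$, and for those the time-dependent potential may be taken independent of $t$.
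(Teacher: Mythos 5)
Your proof is correct and follows essentially the same route as the paper, whose proof of this lemma is simply a citation of \cite{DL-Sz:Euler-as-differential-inclusion} (Proposition 3.2) and \cite{SzW} (Proposition 20) together with the remark that the time-dependent potentials may be restricted to time-independent ones; your explicit splitting into a divergence-free vector potential for $v$ and a stress-function potential for $u+\bar q\,\Id$ is exactly an implementation of that restriction. The one point you leave as a check, surjectivity of the double-curl symbol onto the symmetric matrices annihilating $\eta$ when $d\ge 3$, does hold in every dimension: for instance $M[\phi]:=\Delta\phi\,S-(\nabla^2\phi)\,S-S\,(\nabla^2\phi)+\bigl(S:\nabla^2\phi\bigr)\,\Id$ with the constant symmetric tensor $S=(\bar u+\bar q\,\Id)/|\eta|^2$ is symmetric and divergence-free for every $\phi\in C^\infty(\R^d)$ and reduces to $(\bar u+\bar q\,\Id)\,H''(x\cdot\eta)$ on $\phi(x)=H(x\cdot\eta)$, which closes that step.
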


\begin{proof}
See \cite{DL-Sz:Euler-as-differential-inclusion} Proposition 3.2 and \cite{SzW} Proposition 20. 
\end{proof}

Lemma \ref{l:potential} allows us to construct stationary subsolutions with specific oscillatory behaviour. For the time-dependent Euler equations this was done in Section 3.3 of \cite{SzW}. In the following we denote by $Q=(0,1)^d$ the open unit cube in $\R^d$, and for $w_1,w_2\in \R^d\times\S^{d}_0$ by $[w_1,w_2]:=\{\lambda w_1+(1-\lambda)w_2:\,\lambda\in[0,1]\}$ the line segment joining $w_1$ and $w_2$. 

\begin{lemma}\label{l:2states}
Let $d\geq 2$.
Let $w_i=(v_i,u_i)\in \R^d\times\S^{d}_0$ and $\mu_i\geq 0$ such that
$$
w_2-w_1\in\Lambda,\quad \mu_1w_1+\mu_2w_2=0,\quad \mu_1+\mu_2=1.
$$
For any $\eps>0$ there exists a subsolution $w\in C_c^{\infty}(Q;\R^d\times\S^{d}_0)$ such that
\begin{enumerate}
\item[(i)] $\dist(w(x),[w_1,w_2])<\eps$ for all $x\in Q$;
\item[(ii)] There exist disjoint open subsets $A_1,A_2\subset Q$ such that for $i=1,2$ 
$$
w(x)=w_i\textrm{ for all $x\in A_i$,}\qquad \left||A_i|-\mu_i\right|<\eps.
$$
\end{enumerate}
\end{lemma}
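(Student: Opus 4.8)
The plan is to build $w$ as a single localized plane wave along the direction $\eta$ associated to $w_2-w_1\in\Lambda$ by Lemma~\ref{l:potential}, then correct it to have compact support in $Q$. Write $z:=w_2-w_1\in\Lambda$ and let $\eta\in\R^d\setminus\{0\}$ be the frequency provided by Lemma~\ref{l:potential}(1),(3), so that $x\mapsto z\,H''(x\cdot\eta)=\mathcal L_z[H(\cdot\,\eta)](x)$ is a subsolution for every $H\in C^\infty(\R)$. Observe that $\mu_1w_1+\mu_2w_2=0$ together with $\mu_1+\mu_2=1$ gives $w_1=-\mu_2 z$ and $w_2=\mu_1 z$, so the segment $[w_1,w_2]$ is $\{t z:\,t\in[-\mu_2,\mu_1]\}$. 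Thus I only need a scalar profile $g=H''$ taking values in (a neighbourhood of) $[-\mu_2,\mu_1]$, equal to $-\mu_2$ on a set of relative measure $\approx\mu_2$ and equal to $\mu_1$ on a set of relative measure $\approx\mu_1$, with $g$ the second derivative of a periodic (in $x\cdot\eta$) function so that the mean of the resulting subsolution is zero; periodicity of $H''$ with zero mean and zero mean of its antiderivative is arranged by choosing $H$ periodic. Such a $g$ is elementary to construct in one variable: take a smooth periodic function on $\R$ which is $\equiv\mu_1$ on a large portion of one subinterval, $\equiv-\mu_2$ on a large portion of the complementary subinterval, interpolating smoothly in between on a set of small measure, and with $\int g=0$ over a period (automatic here since $\mu_1\cdot\mu_1+(-\mu_2)\cdot\mu_2$... more precisely one tunes the interpolation so the period-average vanishes); then set $H$ to be a periodic second antiderivative of $g$, which exists precisely because $g$ has zero average.

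The next step is localization. The function $x\mapsto z\,g(x\cdot\eta)$ is defined on all of $\R^d$ and is $Q$-periodic-ish in the $\eta$ direction but not compactly supported in $Q$. To fix this I follow the standard cut-off trick from \cite{DL-Sz:Euler-as-differential-inclusion, SzW}: rescale the oscillation to a small period $1/N$, i.e. replace $g(x\cdot\eta)$ by $g(Nx\cdot\eta)$, and multiply the potential $\phi_N(x):=N^{-2}H(Nx\cdot\eta)\,\chi(x)$ by a fixed cut-off $\chi\in C_c^\infty(Q)$ with $\chi\equiv 1$ on a slightly smaller cube $Q'\Subset Q$; then set $w:=\mathcal L_z[\phi_N]$. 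Since $\mathcal L_z$ is a homogeneous second-order operator, $\mathcal L_z[\phi_N]=z\,g(Nx\cdot\eta)\chi(x)+O(N^{-1})$ uniformly, where the $O(N^{-1})$ error collects the terms in which at least one derivative hits $\chi$. Choosing $N$ large makes this error smaller than $\eps$, which gives (i) on $Q'$ (the values lie within $\eps$ of $[w_1,w_2]$), and on $Q\setminus Q'$ the whole thing is small since $w$ is $O(N^{-1})$ there plus we may shrink $|Q\setminus Q'|$; absorbing that thin shell into the $\eps$-budget of (ii) is harmless. For (ii): on $Q'$, in the region where $g(Nx\cdot\eta)=\mu_1$ we have $w(x)=z\mu_1=w_2$ exactly (the error term vanishes on the flat parts of $g$ where all derivatives of $g$ vanish, provided $\chi\equiv1$ there — so really the error is supported where $g$ is nonconstant or near $\partial Q$), and similarly $w=w_1$ where $g=-\mu_2$. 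The sets $A_1,A_2$ are then the corresponding sublevel/superlevel sets intersected with $Q'$, and by the equidistribution of $x\cdot\eta\bmod 1$ (Weyl / direct Fubini since the set is a union of hyperplane-slabs) their measures are $\mu_2|Q'|+O(1/N)$ and $\mu_1|Q'|+O(1/N)$; taking $Q'$ close to $Q$ and $N$ large gives $\bigl||A_i|-\mu_i\bigr|<\eps$.

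The only genuinely delicate point is the compatibility of the three demands on the flat parts: on $A_i$ we need $w$ to equal $w_i$ \emph{exactly}, not just approximately, so the $O(N^{-1})$ error must vanish identically there. This forces a careful choice of geometry: the set where $\chi\equiv1$ must strictly contain the slabs $\{a\le Nx\cdot\eta\bmod1\le b\}$ on which $g$ is locally constant, so that on those slabs every derivative of $\phi_N$ that could produce an error either hits $\chi$ (but $\chi\equiv1$, derivative zero) or hits $g$ (but $g$ locally constant, derivative zero). This is arranged by first fixing the flat intervals of $g$, then choosing $\chi$, then sending $N\to\infty$ — the order of quantifiers matters. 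I expect this bookkeeping (which flat interval sits inside $\{\chi=1\}$, keeping $A_i$ away from the transition layers and from $\partial Q$) to be the main obstacle; everything else is the routine plane-wave-plus-cutoff construction of \cite{DL-Sz:Euler-as-differential-inclusion, SzW}. One can alternatively cite Lemma~3.3 / the two-state lemma of \cite{SzW} essentially verbatim, the only change being that Lemma~\ref{l:potential} here supplies the (smaller, time-independent) wave cone.
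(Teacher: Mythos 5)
Your overall strategy is exactly the one the paper intends: the paper gives no separate proof of Lemma~\ref{l:2states}, but points to Section~3.3 of \cite{SzW} adapted through Lemma~\ref{l:potential}, i.e.\ a single high-frequency plane wave $z\,g(Nx\cdot\eta)$ generated by the potential, localized by a cut-off $\chi$, with exact equality $w=w_i$ on the flat slabs of $g$ that lie inside the interior of $\{\chi=1\}$, and the order of quantifiers you insist on (fix the flat intervals of $g$, then $\chi$, then $N\to\infty$) is the right bookkeeping. One small inaccuracy: on $Q\setminus Q'$ the leading term $\chi(x)\,z\,g(Nx\cdot\eta)$ is not small, but it still lies on the segment $[w_1,w_2]=\{tz:\,t\in[-\mu_2,\mu_1]\}$ because $0\in[-\mu_2,\mu_1]$, so (i) holds there anyway.

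There is, however, a concrete error in the profile: you take $g=\mu_1$ on relative measure $\approx\mu_1$ and $g=-\mu_2$ on relative measure $\approx\mu_2$. With these proportions the period average of $g$ is $\approx\mu_1^2-\mu_2^2=\mu_1-\mu_2$, an order-one quantity when $\mu_1\neq\mu_2$ that cannot be cancelled by tuning the small transition set while keeping $g$ near $[-\mu_2,\mu_1]$; without zero mean there is no bounded periodic second antiderivative $H$, so $N^{-1}H'(Nx\cdot\eta)$ is not $O(N^{-1})$ and the cut-off errors do not go away. Moreover, even granting the construction, you would get $w=w_1$ on measure $\approx\mu_2$ and $w=w_2$ on measure $\approx\mu_1$ (your own measure count says so), which is the opposite of what (ii) demands; indeed, since $w=\mathcal{L}_z[\phi_N]$ with $\phi_N\in C_c^\infty(Q)$ forces $\int_Q w\,dx=0$, the only proportions compatible with staying $\eps$-close to the segment are those dictated by the barycenter condition $\mu_1w_1+\mu_2w_2=0$. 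The fix is simply to swap the proportions: let $g=-\mu_2$ (value $w_1$) on relative measure $\approx\mu_1$ and $g=\mu_1$ (value $w_2$) on relative measure $\approx\mu_2$; then the mean is $\approx-\mu_1\mu_2+\mu_2\mu_1=0$, can be made exactly zero by adjusting the transition layer, the periodic $H$ exists, and (ii) comes out with $\bigl||A_i|-\mu_i\bigr|<\eps$ as required. With this correction your argument goes through and coincides with the construction the paper cites.
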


\bigskip

Using Lemma \ref{l:2states} as the basic building-block, more complicated oscillatory behaviour can be achieved. The key concept is the notion of \emph{laminates of finite order} \cite{MUELLER-SVERAK} (called \emph{prelaminates }Êin \cite{KIRCHHEIM:habilitation}). We recall 

\begin{definition}\label{d:laminate}
Let $d\geq 2$.
Let $\U\subset \R^d\times\S^d_0$ be a set. The set of \emph{laminates of finite order}, denoted by $\mathcal{L}(\U)$, is the smallest class of (atomic) probability measures supported on $\U$ that
\begin{itemize}
\item contains all Dirac-masses supported on $\U$;
\item is closed under splitting along $\Lambda$-segments inside $\U$.
\end{itemize}
The latter means the following: if $\nu=\sum_{i=1}^N\nu_i\delta_{w_i}\in \mathcal{L}(\U)$, and $w_N\in [z_1,z_2]\subset \U$ with $z_2-z_1\in \Lambda$, then 
$$
\sum_{i=1}^{N-1}\nu_i\delta_{w_i}+\nu_N\left(\lambda\delta_{z_1}+(1-\lambda)\delta_{z_2}\right)\,\in\mathcal{L}(\U),
$$
where $\lambda\in[0,1]$ such that $w_N=\lambda z_1+(1-\lambda) z_2$. 
\end{definition}

A simple induction argument and Lemma \ref{l:2states} then leads to
\begin{proposition}\label{p:laminates}
Let $d\geq 2$.
Let $\U\subset\R^d\times\S^d_0$ be open and
$$
\nu=\sum_{i=1}^N\mu_i\delta_{w_i}\in \mathcal{L}(\U)
$$
be a laminate of finite order with barycenter $\overline{\nu}=0$. For any $\eps>0$ there exists a subsolution $w\in C_c^{\infty}(Q;\R^d\times\S^{d}_0)$ such that
\begin{enumerate}
\item[(i)] $w(x)\in \U$ for all $x\in Q$;
\item[(ii)] there exist pairwise disjoint open subsets $A_1,\dots,A_N\subset Q$ such that for $i=1,\dots,N$
$$
w(x)=w_i\textrm{ for all $x\in A_i$,}\qquad \left||A_i|-\mu_i\right|<\eps.
$$
\end{enumerate}
\end{proposition}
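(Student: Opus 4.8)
The plan is to prove Proposition~\ref{p:laminates} by induction on $N$, the number of atoms in the laminate, using Lemma~\ref{l:2states} as the inductive step. For $N=1$ a laminate with barycenter $0$ must be $\delta_0$, so $w\equiv 0$ works trivially (with $A_1=Q$). For the inductive step, suppose the statement holds for laminates with at most $N-1$ atoms, and let $\nu=\sum_{i=1}^N\mu_i\delta_{w_i}\in\mathcal{L}(\U)$ with $\overline{\nu}=0$. By the definition of $\mathcal{L}(\U)$, such a $\nu$ is obtained from a laminate $\nu'\in\mathcal{L}(\U)$ with fewer atoms by splitting one atom, say $\nu'=\sum_{i=1}^{N-1}\mu_i'\delta_{w_i'}$ and the last atom $w_{N-1}'$ is split along a $\Lambda$-segment $[z_1,z_2]\subset\U$ with $z_2-z_1\in\Lambda$, producing two of the atoms $w_{N-1},w_N$ of $\nu$ (and $w_i=w_i'$, $\mu_i=\mu_i'$ for $i\le N-2$, while $\mu_{N-1}\delta_{w_{N-1}}+\mu_N\delta_{w_N}=\mu_{N-1}'(\lambda\delta_{z_1}+(1-\lambda)\delta_{z_2})$).

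The construction then proceeds in two stages. First, apply the inductive hypothesis to $\nu'$: this gives a subsolution $w'\in C_c^\infty(Q;\R^d\times\S^d_0)$ with $w'(x)\in\U$ for all $x$, together with pairwise disjoint open sets $A_1',\dots,A_{N-1}'\subset Q$ on which $w'\equiv w_i'$ and $||A_i'|-\mu_i'|$ small. Now I must refine the behaviour on $A_{N-1}'$, where $w'=w_{N-1}'\in[z_1,z_2]$. On this open set, rescale Lemma~\ref{l:2states} to each of a fine grid of small subcubes exhausting most of $A_{N-1}'$ (using translation/dilation invariance of the class of subsolutions, since \eqref{e:linear} is linear with constant coefficients): on each such subcube $Q_j\subset A_{N-1}'$ Lemma~\ref{l:2states} provides $w_j\in C_c^\infty(Q_j)$ with $\dist(w_j(x)+w_{N-1}',[z_1,z_2])<\eps'$ and open sets on which $w_j+w_{N-1}'$ equals $z_1$ or $z_2$ in the correct proportions $\lambda,1-\lambda$. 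Define the perturbation $\tilde w$ to be $\sum_j w_j$ on $\bigcup_j Q_j$ and $0$ elsewhere; since each $w_j$ has compact support in its own cube, $\tilde w\in C_c^\infty(Q)$ and the sum of a subsolution plus a subsolution is a subsolution. Set $w=w'+\tilde w$.

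It remains to verify (i) and (ii) for $w$. Outside $A_{N-1}'$ we have $w=w'$, which already lies in $\U$ and equals $w_i'=w_i$ on $A_i'$ for $i\le N-2$; these sets serve as $A_1,\dots,A_{N-2}$. Inside $A_{N-1}'$, the values of $w$ lie within $\eps'$ of the segment $[z_1,z_2]\subset\U$; since $\U$ is open, choosing $\eps'$ small enough guarantees $w(x)\in\U$ there, giving (i). The new sets $A_{N-1},A_N$ are the unions over $j$ of the subsets of $Q_j$ on which $w$ equals $z_1$ resp.\ $z_2$; these are open, pairwise disjoint (and disjoint from the $A_i$, $i\le N-2$), and by a standard density/Vitali covering computation their measures satisfy $|A_{N-1}|\approx \lambda|A_{N-1}'|\approx\lambda\mu_{N-1}'=\mu_{N-1}$ and similarly $|A_N|\approx\mu_N$, which can be made within $\eps$ by taking the grid fine and the earlier approximations sharp. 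The main point requiring care — though it is routine given the tools — is the bookkeeping in this last step: one must choose the errors at each level ($\eps'$ in Lemma~\ref{l:2states}, the fineness of the subcube grid, and the tolerances from the inductive hypothesis) in the right order so that the openness of $\U$ is not violated and the measure estimates close up to the prescribed $\eps$. No genuine obstacle arises since $\Lambda$-connectedness is handled entirely by Lemma~\ref{l:2states}; the proposition is essentially a careful iteration of it.
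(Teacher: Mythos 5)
Your proof is correct and is essentially the argument the paper intends: the paper gives no details beyond ``a simple induction argument and Lemma~\ref{l:2states}'', and your two-stage inductive step (apply the inductive hypothesis to the coarser laminate, then refine on the open set where the subsolution equals the split atom via a fine covering by subcubes carrying translated/rescaled copies of Lemma~\ref{l:2states}, exploiting linearity of \eqref{e:linear}) is precisely the standard way to carry this out. One minor bookkeeping remark: since a splitting may produce an endpoint coinciding with an already existing atom, the induction is cleaner if run on the number of splittings (the order of the laminate) rather than on the number of distinct atoms $N$, but this does not affect the substance of your argument.
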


\bigskip

In light of Proposition \ref{p:laminates} we obtain immediately a useful \emph{sufficient condition} for Property (P):
\begin{proposition}\label{p:criterion}
Let $d\geq 2$.
Let $\U_r\subset \K_r^{co}$ an open set with the following property: there exists a continuous strictly increasing function $\Phi:[0,\infty)\to[0,\infty)$ with $\Phi(0)=0$ such that for any $w_0\in\U_r$ there exists a laminate of finite order $\nu\in \mathcal{L}(\U_r)$ with barycenter $\bar{\nu}=w_0$ such that
$$
\int|w-w_0|^2\,d\nu(w)\geq \Phi(\dist(w_0,\K_r)).
$$
Then $\U_r$ has Property (P).
\end{proposition}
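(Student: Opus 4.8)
The plan is to deduce Property (P) from the hypothesis of Proposition~\ref{p:criterion} by applying Proposition~\ref{p:laminates} to the laminate whose barycenter is the given base point. Fix $\bar w=(\bar v,\bar u)\in\U_r$. By hypothesis there is a laminate of finite order $\nu=\sum_{i=1}^N\mu_i\delta_{w_i}\in\mathcal L(\U_r)$ with barycenter $\bar\nu=\bar w$ and
$$
\int|w-\bar w|^2\,d\nu(w)=\sum_{i=1}^N\mu_i|w_i-\bar w|^2\geq\Phi(\dist(\bar w,\K_r)).
$$
The first technical point is that Proposition~\ref{p:laminates} is stated for laminates with barycenter $0$, producing a perturbation; so I would apply it to the translated laminate $\nu-\bar w:=\sum_i\mu_i\delta_{w_i-\bar w}$, which is a laminate of finite order on the open set $\U_r-\bar w$ with barycenter $0$. (That $\mathcal L(\U)$ is stable under translation is immediate from Definition~\ref{d:laminate}, since $\Lambda$ is a cone that is translation-insensitive in the relevant sense: $z_2-z_1\in\Lambda$ iff $(z_2-\bar w)-(z_1-\bar w)\in\Lambda$.)

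Next I would choose the error parameter in Proposition~\ref{p:laminates}. Since $\U_r$ is open and $\bar w\in\U_r$, and since each $w_i\in\U_r$, there is $\delta>0$ with $\overline{B_\delta(\bar w)}\subset\U_r$ and a uniform lower bound on the distance of each $w_i$ to $\partial\U_r$; shrinking if necessary, pick $\eps>0$ small enough that the subsolution $w\in C_c^\infty(Q;\R^d\times\S^d_0)$ furnished by Proposition~\ref{p:laminates} (for the laminate $\nu-\bar w$) satisfies $w(x)\in\U_r-\bar w$ for all $x\in Q$, i.e. $\bar w+w(x)\in\U_r$; this gives the first bullet of (P). The associated pressure $q\in C_c^\infty(Q)$ comes along because "subsolution" by definition carries its pressure, and it is compactly supported since $w$ is. For the energy bullet, Proposition~\ref{p:laminates}(ii) gives pairwise disjoint open $A_i\subset Q$ with $w(x)=w_i-\bar w$ on $A_i$ and $\bigl||A_i|-\mu_i\bigr|<\eps$, so
$$
\int_Q|w(x)|^2\,dx\ \geq\ \sum_{i=1}^N|A_i|\,|w_i-\bar w|^2\ \geq\ \sum_{i=1}^N\mu_i|w_i-\bar w|^2-\eps\sum_{i=1}^N|w_i-\bar w|^2.
$$
The last sum is bounded because the $w_i$ lie in the bounded set $\K_r^{co}$ (by the explicit formula \eqref{e:convexhull}, or simply because a laminate of finite order has finitely many atoms for each fixed $\bar w$), so for $\eps$ small the right-hand side is at least $\tfrac12\Phi(\dist(\bar w,\K_r))$.

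The only genuine subtlety — and the step I would be most careful about — is the \emph{uniformity} of the function $\Phi$ across all base points $\bar w\in\U_r$, which is exactly what Property (P) demands (one fixed $\Phi$), and the matching uniformity of the admissible $\eps$: a priori the number $N$ of atoms, the sizes $|w_i-\bar w|$, and the clearance $\delta$ could all degenerate as $\bar w$ approaches $\partial\U_r$. However, $\Phi$ is already given as a single function in the hypothesis, and the replacement of $\Phi$ by $\tfrac12\Phi$ (still continuous, strictly increasing, vanishing at $0$) absorbs the $\eps$-loss provided $\eps$ is chosen, for each $\bar w$ separately, small relative to $\sum_i|w_i-\bar w|^2$ and to the clearance $\delta=\delta(\bar w)$; since (P) only asks that for \emph{each} $\bar w$ \emph{there exists} such a $w$, no uniform choice of $\eps$ is needed, and the argument closes. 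Thus Proposition~\ref{p:criterion} reduces the construction of the relaxed sets $\U_r$ to the purely finite-dimensional, measure-theoretic task of exhibiting, at every point of $\U_r$, a $\Lambda$-laminate with controlled variance — which is what Sections~\ref{s:d3} and~\ref{s:d2} carry out.
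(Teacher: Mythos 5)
Your proposal is correct and takes essentially the same route as the paper, which offers no separate argument but presents Proposition~\ref{p:criterion} as an immediate consequence of Proposition~\ref{p:laminates}; your translation of the laminate to barycenter zero and the absorption of the $\eps$-loss into $\tfrac12\Phi$ (chosen per base point, with uniformity residing only in $\Phi$ itself) is exactly the intended verification. The only superfluous caution is in the first bullet of (P): Proposition~\ref{p:laminates}(i) gives $w(x)\in\U_r-\bar w$ for every $x\in Q$ irrespective of $\eps$, so smallness of $\eps$ is needed only for the energy estimate.
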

 
\bigskip

Next, we recall the definition of the lamination-convex hull of a set. 

\bigskip

\begin{definition}\label{d:lchull}
Let $d\geq 2$.
Let $\U\subset\R^d\times \S^d_0$ be a set. The \emph{lamination convex hull} $\U^{lc}$ (with respect to the wave-cone $\Lambda$) is defined as
$$
\U^{lc}=\bigcup_{i=0}^\infty \U^{(i)},
$$
where $\U^{(i)}$ is defined inductively as: $\U^{(0)}=\U$ and
$$
\U^{(i+1)}:=\U^{(i)}\cup\left\{ t\xi+(1-t)\xi'~\colon~\xi,\xi'\in \U^{(i)}, 
\xi-\xi'\in\Lambda, t\in[0,1]\right\}\,.
$$
\end{definition}
Note that in general we have $\U^{lc}\subseteq \U^{co}$. The following is an elementary consequence of Definitions \ref{d:laminate} and \ref{d:lchull}: 

\begin{lemma}\label{l:openlchull}
Let $d\geq 2$.
Let $\U\subset \R^{d}\times\S_0^d$ be an open set. Then $\U^{lc}$ is open. Moreover, for any $w\in \U^{lc}$ there exists 
a laminate of finite order $\nu\in \mathcal{L}(\U^{lc})$ with barycenter $\bar{\nu}=w$ such that $\supp \nu\subset\U$.
\end{lemma}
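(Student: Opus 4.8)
\textbf{Proof plan for Lemma \ref{l:openlchull}.}

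The plan is to prove the two assertions by a simultaneous induction on the level sets $\U^{(i)}$ from Definition \ref{d:lchull}, strengthening the statement slightly so that the induction closes. Precisely, I would prove by induction on $i$ the statement: $\U^{(i)}$ is open, and for every $w\in\U^{(i)}$ there is a laminate of finite order $\nu\in\mathcal L(\U^{(i)})$ with barycenter $w$ and $\supp\nu\subset\U=\U^{(0)}$. The base case $i=0$ is immediate: $\U^{(0)}=\U$ is open by hypothesis, and $\nu=\delta_w$ works.

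For the inductive step, suppose the claim holds for $\U^{(i)}$. First I would check that $\U^{(i+1)}$ is open. The only new points are of the form $t\xi+(1-t)\xi'$ with $\xi,\xi'\in\U^{(i)}$, $\xi-\xi'\in\Lambda$, $t\in[0,1]$; given such a point $w$, pick $\eps>0$ with the $\eps$-balls around $\xi$ and $\xi'$ contained in the open set $\U^{(i)}$, and observe that the segment $[\xi,\xi']$ has an open $\Lambda$-neighbourhood obtained by translating the segment by any vector of length $<\eps$ — since $\Lambda$ is a cone and the endpoints can be independently perturbed within $\U^{(i)}$, one gets an open set of the form $[\xi,\xi']+B_\eps$ (more carefully: for $z$ near $w$, write $z = w + \delta$ with $|\delta|<\eps$, and note $z = t(\xi+\delta)+(1-t)(\xi'+\delta)$ with $\xi+\delta,\xi'+\delta\in\U^{(i)}$ and their difference still in $\Lambda$), hence $z\in\U^{(i+1)}$. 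Thus $\U^{(i+1)}$ is open, and then $\U^{lc}=\bigcup_i\U^{(i)}$ is open as a union of open sets.

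For the laminate assertion at level $i+1$, take $w\in\U^{(i+1)}$. If $w\in\U^{(i)}$ we are done by the inductive hypothesis (a laminate in $\mathcal L(\U^{(i)})$ is a fortiori in $\mathcal L(\U^{(i+1)})$). Otherwise $w=t\xi+(1-t)\xi'$ with $\xi,\xi'\in\U^{(i)}$, $\xi-\xi'\in\Lambda$. By the inductive hypothesis there are laminates $\nu_\xi,\nu_{\xi'}\in\mathcal L(\U^{(i)})$ with barycenters $\xi,\xi'$ and supports in $\U$. The natural candidate is $\nu:=t\,\nu_\xi+(1-t)\,\nu_{\xi'}$, which certainly has barycenter $w$ and support in $\U$; the point to verify is that this convex combination is again a laminate of finite order, i.e. lies in $\mathcal L(\U^{(i+1)})$. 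This follows from a short lemma about the class $\mathcal L$: if $\nu_1,\nu_2\in\mathcal L(\V)$ for an open set $\V$ and the segment joining their barycenters lies in $\V$ with direction in $\Lambda$, then $t\nu_1+(1-t)\nu_2\in\mathcal L(\V)$ — one proves this by first splitting $\delta_{t\bar\nu_1+(1-t)\bar\nu_2}$ along the $\Lambda$-segment $[\bar\nu_1,\bar\nu_2]$ into $t\delta_{\bar\nu_1}+(1-t)\delta_{\bar\nu_2}$, and then successively refining each atom $\delta_{\bar\nu_j}$ into $\nu_j$ using the splitting moves that build $\nu_j$ (this uses that each intermediate $\Lambda$-segment stays inside $\V=\U^{(i+1)}\supseteq\U^{(i)}$, which holds because all the splittings defining $\nu_j$ happen inside $\U^{(i)}$). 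Here we need the segment $[\xi,\xi']\subset\U^{(i)}\subset\U^{(i+1)}$, which is exactly the hypothesis from Definition \ref{d:lchull}.

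The main obstacle is precisely this closure-under-$\Lambda$-combination property of $\mathcal L$: one must be careful that every splitting step used in assembling $t\nu_\xi+(1-t)\nu_{\xi'}$ takes place along a $\Lambda$-segment that stays inside the relevant set, so that the result is legitimately in $\mathcal L(\U^{(i+1)})$ and hence, taking the union over $i$, in $\mathcal L(\U^{lc})$ with support in $\U$. Everything else — openness, barycenter bookkeeping — is routine. (This kind of argument is standard in the laminate-hull literature, cf. \cite{MUELLER-SVERAK,KIRCHHEIM:habilitation}.)
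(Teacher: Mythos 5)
Your simultaneous induction on the levels $\U^{(i)}$ — openness by translating both endpoints of a $\Lambda$-segment by the same small vector, and the laminate statement by first splitting $\delta_w$ along $[\xi,\xi']$ and then refining the two atoms by the finitely many splittings that build $\nu_\xi$ and $\nu_{\xi'}$ inside $\U^{(i)}\subset\U^{(i+1)}$ — is correct and is exactly the elementary argument the paper leaves to the reader, so it matches the intended proof. One small correction: the segment $[\xi,\xi']$ need not be contained in $\U^{(i)}$ as you assert at the end; what the splitting move actually requires, and what holds automatically by the definition of $\U^{(i+1)}$, is $[\xi,\xi']\subset\U^{(i+1)}\subset\U^{lc}$, so the argument is unaffected.
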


We will see in the sections below that, for the set $\K_r$ corresponding to the Euler equations \eqref{eq:the.constraint.set.K.first.formulation} with the wave cone $\Lambda$ in \eqref{eq:Lambda.in.any.dimension} (corresponding to \emph{stationary} solutions), we have  
\begin{itemize}
\item If $d\geq 3$, $\K_r^{lc}=\K_r^{co}$ (c.f. Section \ref{s:d3});
\item If $d=2$, $\K_r^{lc}\subsetneq\K_r^{co}$ (c.f. Section \ref{s:failureofP}).
\end{itemize}
In particular, in the case $d\geq 3$ one can essentially reduce to the case of time-dependent solutions as done in \cite{DL-Sz:Euler-as-differential-inclusion,DL-Sz:admissibility}. On the other hand for $d=2$ we will need to construct an explicit set $\U_r$ satisfying the perturbation property (P) in Section \ref{s:d2}. This will require a more careful analysis of compatible oscillations, more precisely an analysis of laminates of finite order.

%%%%%%%%%%%%%%%%%%%%%%%%%%%%%%%%%%%%%%%%%%%%%%%
%%%%%%%%%%%%%%%%%%%%%%%%%%%%%%%%%%%%%%%%%%%%%%%
\section{The case $d\geq 3$}
\label{s:d3}

Let us first consider the case of dimension $d\geq 3$. It turns out that in this case the proof of Theorem \ref{theorem:main.theorem} can be essentially reduced to the time-dependent case.

We recall some terminology from \cite{DL-Sz:admissibility} Section 4.3. Given $r>0$ we call a line segment $\sigma\subset\R^d\times\S_0^d$ \emph{admissible} if 
\begin{itemize}
\item $\sigma$ is contained in the interior of $\K_r^{co}$;
\item $\sigma$ is parallel to $(a,a\otimes a)-(b,b\otimes b)$ for some $a,b\in\R^d$ with $|a|^2=|b|^2=r$ and $b\neq \pm a$.
\end{itemize}
We have the following:
\begin{lemma}[Lemma 6 in \cite{DL-Sz:admissibility}]\label{l:admissiblesegments}
Let $d\geq 2$. There exists a constant $C=C(d,r)>0$, such that for any $w=(v,u)\in\textrm{int }\K^{co}_r$ there exists an admissible line segment $\sigma=[w-\bar{w},w+\bar{w}]$, $\bar{w}=(\bar{v},\bar{u})$, such that
$$
|\bar{v}|\geq C\left(r-|v|^2\right)\textrm{ and }\dist(\sigma,\partial\K^{co}_r)\geq \frac{1}{2}\dist(w,\partial\K^{co}_r).
$$
\end{lemma}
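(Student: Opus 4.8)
\textbf{Proof proposal for Lemma \ref{l:admissiblesegments}.}

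The plan is to reduce everything to the explicit description of $\K_r^{co}$ given in \eqref{e:convexhull} and to exhibit the admissible segment by hand. Fix $w=(v,u)\in\textrm{int }\K_r^{co}$, so that $v\otimes v-u<\frac{r}{d}\,\Id$ strictly (as quadratic forms). First I would note that it suffices to find \emph{one} pair $a,b\in\R^d$ with $|a|^2=|b|^2=r$, $b\neq\pm a$, such that the direction $(a,a\otimes a)-(b,b\otimes b)$ has a nonzero $v$-component whose length is bounded below by a fixed multiple of $r-|v|^2$; once this direction is chosen, one rescales it so that the resulting segment $[w-\bar w,w+\bar w]$ stays inside $\textrm{int }\K_r^{co}$ and keeps distance at least $\frac12\dist(w,\partial\K_r^{co})$ from the boundary — this last rescaling is a soft convexity argument and costs only a factor $\frac12$ by construction. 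The content is therefore the lower bound $|\bar v|\gtrsim r-|v|^2$.

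For the main estimate I would argue as follows. If $v=0$ the quantity $r-|v|^2=r$ is bounded and one just picks any nondegenerate segment, so assume $v\neq 0$. Choose $a,b$ symmetric about $v$: write $a=\alpha v/|v|+\beta e$, $b=\alpha v/|v|-\beta e$ for a unit vector $e\perp v$ (available since $d\geq 2$), with $\alpha^2+\beta^2=r$; then $a+b=2\alpha v/|v|$ and the $v$-component of the direction $(a,a\otimes a)-(b,b\otimes b)$ is $a-b=2\beta e$, so $|\bar v|$ is proportional to $\beta$ times the rescaling factor. The rescaling factor is controlled below by $\dist(w,\partial\K_r^{co})$, which by \eqref{e:Psi} (or rather its lower counterpart, comparing $\dist(w,\K_r)$ and $\dist(w,\partial\K_r^{co})$ to $r-|v|^2$ via the explicit hull) is itself comparable to $r-|v|^2$; and one must check that the fixed direction $(a,a\otimes a)-(b,b\otimes b)$ is transversal to $\partial\K_r^{co}$ at $w$ in a quantitative way, so that a step of size $\sim\dist(w,\partial\K_r^{co})$ in that direction is admissible. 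Putting these comparisons together yields $|\bar v|\geq C(d,r)(r-|v|^2)$.

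The main obstacle is the quantitative transversality: one needs that the admissible directions do not become tangent to $\partial\K_r^{co}$ as $w$ approaches the boundary in a bad way, i.e.\ that the constant $C$ can be taken uniform over all $w\in\textrm{int }\K_r^{co}$ (depending only on $d$ and $r$). This is where the explicit formula \eqref{e:convexhull} is essential: the boundary $\partial\K_r^{co}$ is the locus where the smallest eigenvalue of $\frac{r}{d}\Id-v\otimes v+u$ vanishes, and one must show that moving in the direction $(a-b,a\otimes a-b\otimes b)$ with $a,b$ chosen as above increases that eigenvalue at a rate bounded below, uniformly. Since this is precisely Lemma 6 of \cite{DL-Sz:admissibility}, I would simply invoke that reference for the detailed verification; the point of including the statement here is that the \emph{same} segments are admissible in the stationary setting because the constraint set $\K_r$ and its convex hull are unchanged — only the wave cone $\Lambda$ has shrunk, and admissibility of a segment as defined above makes no reference to $\Lambda$.
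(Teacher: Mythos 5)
Two separate things are going on in your proposal. As far as the paper is concerned, there is nothing to prove here: Lemma~\ref{l:admissiblesegments} is imported verbatim from \cite{DL-Sz:admissibility} (Lemma~6 there), and the paper's ``proof'' is the citation. Your closing observation is exactly the relevant point for the present paper: admissibility of a segment refers only to $\K_r^{co}$ and to directions $(a,a\otimes a)-(b,b\otimes b)$ with $|a|^2=|b|^2=r$, so the statement is insensitive to the shrinking of the wave cone; the cone only enters later, through Lemma~\ref{l:Lambda3}. If you had stopped at ``cite the reference, note that $\Lambda$ plays no role,'' you would match the paper.

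However, the sketch you offer as a standalone argument has a genuine gap: you assert that $\dist(w,\partial\K_r^{co})$ is comparable to $r-|v|^2$, and this is false. Inequality \eqref{e:Psi} bounds $\dist(w,\K_r)$ \emph{above} by a modulus of $r-|v|^2$; there is no lower bound of $\dist(w,\partial\K_r^{co})$ in terms of $r-|v|^2$. Concretely, for $d=2$, $r=1$, take $v=0$ and $u=\mathrm{diag}(\tfrac12-\eps,-\tfrac12+\eps)$: then $w\in\textrm{int }\K_1^{co}$ with $\dist(w,\partial\K_1^{co})\sim\eps$ while $r-|v|^2=1$. Consequently your recipe --- fix a direction symmetric about $v$ and rescale it by the distance to the boundary --- only yields $|\bar v|\gtrsim\dist(w,\partial\K_r^{co})$, which does not dominate $r-|v|^2$; no ``quantitative transversality'' can rescue this, because near a boundary point any transversal step is necessarily short. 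The actual mechanism in Lemma~6 of \cite{DL-Sz:admissibility} is different: one writes $w$ as a convex combination $\sum_i\lambda_i(v_i,v_i\otimes v_i-\tfrac rd\Id)$ of points of $\K_r$ (Carath\'eodory) and takes $\bar w$ proportional to $(v_i-v_j,\,v_i\otimes v_i-v_j\otimes v_j)$ for a pair maximizing $\lambda_i\lambda_j|v_i-v_j|$, which controls $r-|v|^2=\sum_{i<j}\lambda_i\lambda_j|v_i-v_j|^2$; the resulting segment is long even when $w$ is $\eps$-close to $\partial\K_r^{co}$, because it runs essentially parallel to the nearby boundary face (in the example above, the direction $(2e_1,0)$ admits a segment of length of order $1$). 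So either drop the sketch and rely on the citation, as the paper does, or replace the distance-to-boundary comparison by a decomposition-based choice of direction.
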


\medskip

The key observation is that, even in the stationary case with $d\geq 3$, admissible line segments are in $\Lambda$-directions:

\begin{lemma}\label{l:Lambda3}
Let $d\geq 3$.
Let $a,b\in\R^d$ with $|a|^2=|b|^2=r$ and $b\neq \pm a$, and let $(\bar{v},\bar{u})=(a,a\otimes a)-(b,b\otimes b)$. Then $(\bar{v},\bar{u})\in\Lambda$.
\end{lemma}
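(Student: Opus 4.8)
The plan is to exhibit an explicit $\eta\in\R^d\setminus\{0\}$ and $\bar q\in\R$ witnessing membership of $(\bar v,\bar u)$ in the wave cone $\Lambda$ as defined in \eqref{eq:Lambda.in.any.dimension}. Here $\bar v=a-b$ and $\bar u=a\otimes a-b\otimes b$. The natural candidate is the same one that works in the time-dependent case: take $\eta$ to be any nonzero vector orthogonal to both $a+b$ and $a-b$, which is where dimension enters. Since $a\neq\pm b$, the vectors $a-b$ and $a+b$ are linearly independent, so they span a $2$-plane; when $d\geq 3$ its orthogonal complement is nontrivial, so such an $\eta\neq 0$ exists (this is exactly the step that fails for $d=2$). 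With this choice, $\bar v\cdot\eta=(a-b)\cdot\eta=0$ is immediate.

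It then remains to check $\bar u\eta+\bar q\eta=0$ for a suitable scalar $\bar q$. Compute
$$
\bar u\eta=(a\otimes a-b\otimes b)\eta=(a\cdot\eta)\,a-(b\cdot\eta)\,b.
$$
Now $a\cdot\eta$ and $b\cdot\eta$ are both determined by the projection of $\eta$ onto $\mathrm{span}\{a-b,a+b\}$, which is zero by construction; more directly, $2(a\cdot\eta)=(a+b)\cdot\eta+(a-b)\cdot\eta=0$ and similarly $2(b\cdot\eta)=(a+b)\cdot\eta-(a-b)\cdot\eta=0$. Hence $\bar u\eta=0$, and one simply takes $\bar q=0$. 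This verifies all three defining conditions of $\Lambda$ (note $\bar v=a-b\neq 0$ since $b\neq a$, and $\eta\neq 0$ by construction), so $(\bar v,\bar u)\in\Lambda$.

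I do not expect a genuine obstacle here; the lemma is essentially a linear-algebra observation, and the only subtlety is the dimension count guaranteeing existence of $\eta$, which is precisely why the hypothesis is $d\geq 3$ rather than $d\geq 2$. If one wanted to be more careful one could remark that the case $b=-a$ is excluded (so that $a+b\neq 0$ and the two spanning vectors are genuinely independent rather than merely nonzero), but this is already part of the hypothesis. One could also note in passing that this computation recovers, as it must, the fact that such segments are in $\Lambda$-directions and hence admissible for lamination, which is the use to which the lemma will be put together with Lemma~\ref{l:admissiblesegments}.
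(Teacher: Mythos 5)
Your proof is correct and is essentially the paper's own argument: the paper simply picks $\eta\neq 0$ with $\eta\cdot a=\eta\cdot b=0$ (possible since $d\geq 3$ and $a,b$ span at most a $2$-plane) and takes $\bar q=0$, which is exactly your choice phrased via the equivalent conditions $\eta\perp a\pm b$. The extra remarks on linear independence and the dimension count are fine but add nothing beyond what the paper leaves implicit.
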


\begin{proof}
Recall from \eqref{eq:Lambda.in.any.dimension} that $(\bar{v},\bar{u})\in \Lambda$ if there exists a vector $\eta\neq 0$ such that $\bar{v}\cdot \eta=0$ and $\bar{u}\eta=\bar{q}\eta$ for some $\bar{q}\in\R$. Choose $\eta\in\R^d\setminus\{0\}$ such that $\eta\cdot a=\eta\cdot b=0$. Then obviously $\eta\cdot\bar{v}=0$ and $\bar{u}\eta=(a\otimes a-b\otimes b)\eta=0$. This proves that $(\bar{v},\bar{u})\in \Lambda$ (with $\bar{q}=0$).
\end{proof}

\medskip

\begin{corollary}\label{c:d3}
Let $d\geq 3$. Then $\U_r:=\textrm{int}\,\K^{co}_r$ has the perturbation property (P) and property \eqref{e:monotone}.
\end{corollary}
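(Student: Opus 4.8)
The plan is to verify the two hypotheses of Proposition~\ref{p:criterion} for the set $\U_r=\textrm{int}\,\K_r^{co}$, and then separately check property \eqref{e:monotone}.

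First I would establish Property (P). Fix $w=(v,u)\in\textrm{int}\,\K_r^{co}$. By Lemma~\ref{l:admissiblesegments} there is an admissible segment $\sigma=[w-\bar w,w+\bar w]$ with $\bar w=(\bar v,\bar u)$ parallel to $(a,a\otimes a)-(b,b\otimes b)$ for suitable $a,b$ with $|a|^2=|b|^2=r$, $b\neq\pm a$, satisfying $|\bar v|\geq C(r-|v|^2)$ and $\dist(\sigma,\partial\K_r^{co})\geq\tfrac12\dist(w,\partial\K_r^{co})$. By Lemma~\ref{l:Lambda3} (here is where $d\geq 3$ is essential, since one needs a nonzero $\eta$ orthogonal to both $a$ and $b$), the direction $(a,a\otimes a)-(b,b\otimes b)$ lies in $\Lambda$, hence $\bar w\in\Lambda$. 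Therefore the two-point measure $\nu=\tfrac12\delta_{w-\bar w}+\tfrac12\delta_{w+\bar w}$ is a laminate of finite order supported on $\sigma\subset\U_r$ with barycenter $w$. It satisfies
$$
\int|z-w|^2\,d\nu(z)=|\bar w|^2\geq |\bar v|^2\geq C^2(r-|v|^2)^2.
$$
Combining with \eqref{e:Psi}, which allows us to bound $\dist(w,\K_r)$ by $\Psi(r-|v|^2)$, we obtain $\int|z-w|^2\,d\nu\geq \Phi(\dist(w,\K_r))$ for an appropriate continuous strictly increasing $\Phi$ with $\Phi(0)=0$ (built from $C^2(\cdot)^2$ and the inverse of $\Psi$). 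Proposition~\ref{p:criterion} then yields Property (P) for $\U_r$.

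For \eqref{e:monotone}, take $r<r'$ and $w=(v,u)\in\K_r$, so $u=v\otimes v-\tfrac rd\,\Id$ and $|v|^2=r$. Using the explicit formula \eqref{e:convexhull}, we have $v\otimes v-u=\tfrac rd\,\Id<\tfrac{r'}{d}\,\Id$ strictly, so $w$ lies in the interior of $\K_{r'}^{co}=\U_{r'}$; hence $\K_r\subset\U_{r'}$.

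The main obstacle is really localized in Lemma~\ref{l:Lambda3} and the dimension restriction it carries: for $d\geq 3$ the orthogonal complement of $\mathrm{span}\{a,b\}$ is nontrivial, which makes admissible directions automatically $\Lambda$-directions, so the time-dependent machinery of \cite{DL-Sz:admissibility} transfers verbatim once this is observed. Everything else — the two-point laminate, the quantitative bound, and \eqref{e:monotone} — is then routine bookkeeping with the estimates already recorded in the excerpt. (The case $d=2$, where this fails and $\K_r^{lc}\subsetneq\K_r^{co}$, is precisely what forces the separate treatment in Section~\ref{s:d2}.)
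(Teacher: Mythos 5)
Your proposal is correct and follows essentially the same route as the paper: both rest on Lemma~\ref{l:admissiblesegments} combined with Lemma~\ref{l:Lambda3} to produce a $\Lambda$-segment through the given point, oscillate along it, and convert the lower bound $|\bar v|\geq C(r-|v|^2)$ into Property (P) via \eqref{e:Psi}, with \eqref{e:monotone} read off from the explicit hull formula \eqref{e:convexhull}. The only cosmetic difference is that you package the oscillation as a two-point laminate fed into Proposition~\ref{p:criterion}, whereas the paper applies Lemma~\ref{l:2states} directly with a small $\eps$ controlled by $\dist(\bar w,\partial\K_r^{co})$; these are the same mechanism.
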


\begin{proof}
Property \eqref{e:monotone} follows easily from the explicit formula \eqref{e:convexhull}. 

To show property (P), let $\bar{w}\in\U_r$. Using Lemmas \ref{l:Lambda3} and \ref{l:admissiblesegments} we find the existence of 
$\tilde w\in\Lambda$, such that
$$
[\bar{w}-\tilde w,\bar{w}+\tilde w]\subset\U_r,\qquad \dist([\bar{w}-\tilde w,\bar{w}+\tilde w],\partial\K^{co}_r)\geq \frac{1}{2}\dist(\bar{w},\partial\K^{co}_r),
$$
and
$$
|\tilde w|\geq \frac{1}{4}C(r-|\bar v|^2).
$$
Using Lemma \ref{l:2states} with a suitable $\eps<\frac{1}{4}\dist(\bar{w},\partial\K^{co}_r)$ we construct a subsolution $w=(v,u)\in C_c^{\infty}(Q;\R^d\times\S^d_0)$ such that $\bar{w}+w(x)\in\U_r$ for all $x\in Q$ and
$$
\int_Q|w(x)|^2\,dx\geq \frac{1}{2}|\tilde w|^2\geq C'(r-|\bar v|^2)^2
$$
for some constant $C'>0$. Using the observation in \eqref{e:Psi} we deduce property (P) 
as required.
\end{proof}

%%%%%%%%%%%%%%%%%%%%%%%%%%%%%%%%%%%
%%%%%%%%%%%%%%%%%%%%%%%%%%%%%%%%%%%

\section{Laminates in the two-dimensional case}
\label{s:d2}

Let us now consider the case $d=2$. Given a vector $\bar{v}\in \R^2\setminus\{0\}$ we denote by $\bar{v}^\perp=(\bar{v}_2,-\bar{v}_1)$ the perpendicular.

We start with the following observation:
\begin{lemma}
Let $d=2$. Then $\Lambda$ in \eqref{eq:Lambda.in.any.dimension} can be written as
$$
\Lambda=\Bigl\{(\bar{v},\bar{u})\in(\R^2\setminus\{0\})\times\S^2_0:\,\bar{u}\bar{v}\cdot\bar{v}^{\perp}=0\Bigr\}.
$$
\end{lemma}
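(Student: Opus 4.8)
The plan is to unwind the definition of $\Lambda$ in \eqref{eq:Lambda.in.any.dimension} in the special case $d=2$, where the geometry is rigid enough that the condition $\bar{u}\eta+\bar{q}\eta=0$, $\bar{v}\cdot\eta=0$ collapses to a single scalar equation. First I would observe that in two dimensions the constraint $\bar{v}\cdot\eta=0$ determines the direction of $\eta$ uniquely (up to scaling): since $\bar{v}\neq 0$, we must have $\eta\parallel\bar{v}^{\perp}$, so without loss of generality $\eta=\bar{v}^{\perp}$. Thus the only remaining content of membership in $\Lambda$ is the existence of $\bar{q}\in\R$ with $\bar{u}\bar{v}^{\perp}=-\bar{q}\bar{v}^{\perp}$, i.e. $\bar{u}\bar{v}^{\perp}$ is a scalar multiple of $\bar{v}^{\perp}$, i.e. $\bar{v}^{\perp}$ is an eigenvector of $\bar{u}$.

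The key step is then to translate ``$\bar{v}^{\perp}$ is an eigenvector of $\bar{u}$'' into the stated scalar identity $\bar{u}\bar{v}\cdot\bar{v}^{\perp}=0$. For this I would use that $\{\bar{v},\bar{v}^{\perp}\}$ is an orthogonal basis of $\R^2$, so $\bar{v}^{\perp}$ is an eigenvector of the symmetric matrix $\bar{u}$ if and only if $\bar{u}\bar{v}^{\perp}$ has no component along $\bar{v}$, that is $\bar{u}\bar{v}^{\perp}\cdot\bar{v}=0$; and by symmetry of $\bar{u}$ this equals $\bar{u}\bar{v}\cdot\bar{v}^{\perp}$, giving exactly the right-hand side of the asserted formula. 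Conversely, if $\bar{u}\bar{v}\cdot\bar{v}^{\perp}=0$, then $\bar{u}\bar{v}^{\perp}$ is orthogonal to $\bar{v}$, hence parallel to $\bar{v}^{\perp}$, so we may set $\bar{q}:=-(\bar{u}\bar{v}^{\perp}\cdot\bar{v}^{\perp})/|\bar{v}^{\perp}|^2$ and check that $\bar{u}\bar{v}^{\perp}+\bar{q}\bar{v}^{\perp}=0$, with $\eta=\bar{v}^{\perp}\neq 0$; this exhibits $(\bar{v},\bar{u})\in\Lambda$.

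There is no real obstacle here; the only point requiring a word of care is the uniqueness of the direction of $\eta$: one must note that $\eta=0$ is excluded in the definition, so $\bar{v}\cdot\eta=0$ together with $\bar{v}\neq 0$ genuinely forces $\eta$ to be a nonzero multiple of $\bar{v}^{\perp}$, and since the two remaining conditions are homogeneous in $\eta$ we lose nothing by normalizing $\eta=\bar{v}^{\perp}$. I would also remark in passing, since it is needed later, that one may equally well write the condition using the $2\times 2$ structure of $\S^2_0$ directly: writing $\bar{u}=\begin{pmatrix}a & b\\ b & -a\end{pmatrix}$ and $\bar{v}=(\bar{v}_1,\bar{v}_2)$, the quantity $\bar{u}\bar{v}\cdot\bar{v}^{\perp}$ becomes an explicit quadratic form in $(\bar{v}_1,\bar{v}_2)$ with coefficients $a,b$, but this explicit form is not needed for the proof of the lemma itself.
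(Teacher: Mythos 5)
Your proof is correct and follows the same route as the paper: reduce membership in $\Lambda$ to the statement that $\bar{v}^{\perp}$ is an eigenvector of $\bar{u}$ (forced since $\bar{v}\cdot\eta=0$ pins down $\eta$ up to scale in two dimensions), and then use the symmetry of $\bar{u}$ together with the orthogonal basis $\{\bar{v},\bar{v}^{\perp}\}$ to identify this with the scalar condition $\bar{u}\bar{v}\cdot\bar{v}^{\perp}=0$. You have merely written out in detail what the paper's two-line proof leaves implicit, so there is nothing to add.
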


\begin{proof}
According to \eqref{eq:Lambda.in.any.dimension}, $(\bar{v},\bar{u})\in\Lambda$ precisely if $\bar{u}$ possesses an eigenvector perpendicular to $\bar{v}$. In two dimensions this means that $\bar{v}^\perp$ is an eigenvector of $\bar{u}$. The claim follows.
\end{proof}

\subsection{Suitable coordinates in state-space}
 
We proceed by introducing coordinates on the state-space $\R^2\times\S^{2}_0$.
The state variables $(v,u)$ can be written in coordinates as
$$
v=\begin{pmatrix}a\\b\end{pmatrix},\quad u=\begin{pmatrix}c&d\\d&-c\end{pmatrix}.
$$
It is then convenient to identify the state space $\R^2\times\S^{2}_0$ with $\C\times\C$, by introducing
$$
z=a+ib,\quad \zeta=c+id,
$$
so that, in the following, we will write
$$
w=(z,\zeta)\in \R^2\times\S^{2}_0.
$$
In these variables we have
\begin{align*}
\K_r&=\Bigl\{(z,\zeta):\,|z|^2=r\textrm{ and }\zeta=\tfrac{1}{2}z^2\Bigr\}\\
\Lambda&=\Bigl\{(z,\zeta):\,\Im(z^2\bar{\zeta})=0\Bigr\}.
\end{align*}
It is easy to see that both $\K_r$ and $\Lambda$ are invariant under the transformations
\begin{equation}\label{e:rotation}
R_{\theta}:(z,\zeta)\mapsto(ze^{i\theta},\zeta e^{2i\theta}),\qquad \theta\in[0,2\pi],
\end{equation}
and 
\begin{equation}\label{e:bar}
(z,\zeta)\mapsto (\bar{z},\bar{\zeta}).
\end{equation}
In light of \eqref{e:rotation} it is natural to consider the 3-dimensional subspace
$$
L=\Bigl\{(z,\zeta)\in\C\times\C:\,\Im(\zeta)=0\Bigr\}
$$
where we can use the coordinates $(a+ib,c)\in \C\times \R\cong L$. Note that in these coordinates
$$
\K_r\cap L=\Bigl\{(\sqrt{r},\tfrac{1}{2}r),(-\sqrt{r},\tfrac{1}{2}r),(i\sqrt{r},-\tfrac{1}{2}r),(-i\sqrt{r},-\tfrac{1}{2}r)\Bigr\}
$$
and
\begin{equation}\label{e:LambdaL}
\Lambda\cap L=\Bigl\{(a+ib,c):\,abc=0\Bigr\}.
\end{equation}

\subsection{Laminates in $L$}

We begin with an explicit construction. Fix $r>0$. We define for $(a+ib,c)$ with $|c|<r/2$ 
$$
f_r(a+ib,c):=\frac{\sqrt{r}|a|}{\frac{r}{2}+c}+\frac{\sqrt{r}|b|}{\frac{r}{2}-c}
$$
and set
\begin{equation}\label{e:Vr}
V_r=\Bigl\{(z,c)\in L:\,f_r(z,c)<1,\, |c|<r/2\Bigr\}.
\end{equation}

\begin{figure}[h]
\includegraphics[scale=0.9]{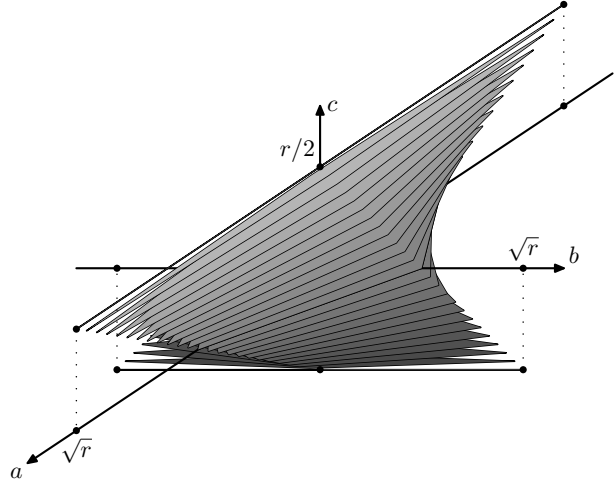}
\caption{\label{f:Vr} The set $V_r$, bounded by 4 ruled surfaces - see Proposition \ref{p:Vr} (iv).}
\end{figure}
The sets $V_r$, $r>0$, have following properties:
 
\begin{proposition}\label{p:Vr}
For any $r>0$ we have
\begin{enumerate}
\item[(i)] $V_r$ is open (relatively in $L\cong \C\times\R$);
\item[(ii)] $\overline{V_r}\supset (\K_r\cap L)$:
\item[(iii)] $\overline{V_{r'}}\subset V_r$ for any $0<r'<r$;
\item[(iv)] $\overline{V_r}\subset (\K_r\cap L)^{lc}$.
\end{enumerate}
More precisely, for any $w\in \overline{V_r}$ there exists a laminate of at most fourth order 
$\nu\in \mathcal{L}(\overline{V_r})$ such that
$\bar{\nu}=w$ and $\supp\nu\subset \K_r\cap L$.
\end{proposition}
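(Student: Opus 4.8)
\textbf{Proof plan for Proposition~\ref{p:Vr}.}

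The plan is to analyze the explicit function $f_r$ and the set $V_r$ it defines, exploiting the product structure visible in the formula $f_r(a+ib,c)=\frac{\sqrt r|a|}{r/2+c}+\frac{\sqrt r|b|}{r/2-c}$. First I would dispose of (i): since $f_r$ is continuous on the open slab $\{|c|<r/2\}$ in $L$, the set $V_r=\{f_r<1\}\cap\{|c|<r/2\}$ is relatively open there, hence open in $L$. For (ii), I would observe that each of the four points of $\K_r\cap L$ makes one of the two fractions equal to $1$ and the other equal to $0$ (e.g. $(\sqrt r,\tfrac12 r)$ gives $\frac{\sqrt r\cdot\sqrt r}{r/2+r/2}+0=1$), so these points lie on the boundary $\{f_r=1\}$ and thus in $\overline{V_r}$, provided one checks that nearby points with $f_r$ slightly below $1$ exist — which follows by decreasing $|a|$ (resp. $|b|$) slightly. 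For (iii), I would rewrite the condition $f_r(z,c)<1$ after noting that both denominators $r/2\pm c$ are monotone and bounded between the slabs, so $\overline{V_{r'}}$ is characterized by $f_{r'}\le 1$, $|c|\le r'/2$; then a direct comparison $f_{r'}(z,c)\ge f_r(z,c)$ on the smaller slab (using $\sqrt{r'}<\sqrt r$ in the numerators is the wrong direction, so the real point is that the denominators $r/2\pm c$ dominate $r'/2\pm c$ and the ratio structure wins) shows $\overline{V_{r'}}$ sits strictly inside $\{f_r<1\}$; the strictness of the slab inclusion $|c|\le r'/2<r/2$ handles the points where a fraction vanishes. This comparison is a short but slightly fiddly computation that I would carry out carefully rather than wave at.

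The heart of the matter is (iv), together with the refined statement on fourth-order laminates. Here the plan is to use the $\Lambda$-structure $\Lambda\cap L=\{(a+ib,c):abc=0\}$ from \eqref{e:LambdaL}, which says a segment in $L$ is a $\Lambda$-segment exactly when along it at least one of the three coordinates $a,b,c$ stays zero — or more usefully, the \emph{direction} of the segment has one of $a,b,c$ equal to zero. The idea is a two-stage splitting. Given $w=(a+ib,c)\in\overline{V_r}$, first split in the $c$-direction: the direction $(0,0,1)$ has $a=b=0$, hence lies in $\Lambda$; move along it to reach the two planes $c=r/2$ and $c=-r/2$. On $c=r/2$ the constraint $f_r\le 1$ forces $|a|\le\sqrt r/2\cdot(r/2+r/2)/\sqrt r$... more precisely forces the $b$-term to allow $|a|$ up to $\sqrt r$ only when $b=0$; I then split again in the $a$-direction (direction $(1,0,0)\in\Lambda$) down to $a=\pm\sqrt r$, landing on $(\pm\sqrt r,\tfrac12 r)$. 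Symmetrically on $c=-r/2$ one splits in the $b$-direction to reach $(\pm i\sqrt r,-\tfrac12 r)$. The bookkeeping is: the first split produces two masses on the two horizontal planes with the correct barycenter in $c$; each of those then splits into two Dirac masses at points of $\K_r\cap L$, for a total of at most four atoms — this is the "laminate of at most fourth order" claim. One must verify at each stage that the intermediate segments stay inside $\overline{V_r}$, which is where the precise shape of $V_r$ (the four ruled surfaces of Figure~\ref{f:Vr}) enters: the bounding surfaces are exactly the rulings swept by these $\Lambda$-segments, so by construction $V_r$ is "laminar" in these directions.

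The main obstacle, I expect, is making the second splitting precise: after moving to the plane $c=\pm r/2$ one is on the \emph{boundary} of the slab, where $f_r$ degenerates (one denominator vanishes), so one cannot literally evaluate $f_r$ there and must instead argue by a limiting/continuity argument — approach $c=r/2$ through interior points, perform the $a$-split at level $c=r/2-\delta$ where the reachable range of $a$ is $|a|\le(r/2-\delta+ \text{something})/\sqrt r\cdot\ldots$, and pass $\delta\to 0$, checking the barycenter condition survives. Equivalently, and perhaps cleaner, I would first prove $\overline{V_r}=\{f_r\le 1,\ |c|\le r/2\}$ by an explicit two-sided inclusion, then do all splitting at the closed-set level where the endpoints are genuinely the corner points of $\K_r\cap L$; the segments $[(a+ib,r/2),(\pm\sqrt r,r/2)]$ have direction with vanishing $b$- and $c$-components, hence in $\Lambda$, and lie in $\overline{V_r}$ because on the plane $c=r/2$ the set $\overline{V_r}$ is exactly the segment $\{b=0,\ |a|\le\sqrt r\}$ — a one-dimensional $\Lambda$-segment in its own right. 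Assembling the two splittings and tracking the four weights $\lambda_1\mu_1,\lambda_1\mu_2,\lambda_2\mu_3,\lambda_2\mu_4$ against the barycenter $w$ then gives both (iv) and the order-four refinement; (ii) is then immediate as the $\delta\to 0$, endpoint case, and I would present it in that order.
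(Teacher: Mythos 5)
Parts (i)--(iii) of your plan are essentially sound and match the paper: your comparison $f_{r'}\geq f_r$ for $r'<r$ is exactly the monotonicity $\partial_r f_r<0$, and the edge cases ($|a|+|b|=0$, and points on the faces $|c|=r'/2$, where strictness comes from $2\sqrt{rr'}<r+r'$) need the separate check you allude to. One small slip in (ii): at the four points of $\K_r\cap L$ one has $|c|=r/2$, so $f_r$ is not defined there (the second term is $0/0$); to see they lie in $\overline{V_r}$ you must approach through the open slab, perturbing $c$ inward as well as $|a|$, not merely "decreasing $|a|$".

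The genuine gap is the first splitting step in (iv). You propose to split vertically, along $(0,0,1)$, from $w=(a+ib,c)$ all the way to the planes $c=\pm r/2$. If $a\neq 0$ \emph{and} $b\neq 0$ this is inadmissible: along the vertical line the term $\sqrt r|b|/(\tfrac r2-c)$ blows up as $c\to\tfrac r2$ (and $\sqrt r|a|/(\tfrac r2+c)$ as $c\to-\tfrac r2$), so the segment leaves $\overline{V_r}$ strictly before reaching the planes, and its endpoints $(a,b,\pm\tfrac r2)$ are not in $\overline{V_r}$, whose top and bottom faces are only the segments $\{b=0,\,|a|\leq\sqrt r,\,c=\tfrac r2\}$ and $\{a=0,\,|b|\leq\sqrt r,\,c=-\tfrac r2\}$ (e.g. $(0.1,0.1,0)\in V_1$ but $f_1(0.1+0.1i,0.4)>1$). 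Since Definition~\ref{d:laminate} requires every splitting segment to lie inside the set, the resulting measure is not in $\mathcal{L}(\overline{V_r})$, and your second-stage split lands on points $(\pm\sqrt r,b,\tfrac r2)$ with $b\neq0$, which are not in $\K_r\cap L$; the scheme only works when $ab=0$. (Also, the direction $(0,0,1)$ has $\bar v=0$ and is excluded from $\Lambda$ by \eqref{eq:Lambda.in.any.dimension}, and the rulings of the bounding surfaces in Figure~\ref{f:Vr} are not vertical, so the "laminar in these directions" claim fails for the vertical direction.) The correct geometry, which is the paper's, is: first split \emph{horizontally within the $c$-slice} (the rhombus) to reach two points of $\partial V_r$; a boundary point with $a,b\geq0$ lies on the horizontal $\Lambda$-segment joining $(\tfrac{\sqrt r}{2}+\tfrac{c}{\sqrt r},0,c)$ and $(0,\tfrac{\sqrt r}{2}-\tfrac{c}{\sqrt r},c)$; these lie on the oblique ruled segments $[(\sqrt r,0,\tfrac r2),(0,0,-\tfrac r2)]$ and $[(0,0,\tfrac r2),(0,\sqrt r,-\tfrac r2)]$, and finally the apexes $(0,0,\pm\tfrac r2)$ split horizontally into pairs of points of $\K_r\cap L$. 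All these segments lie in $\overline{V_r}$ and all their directions have one of $a,b,c$ equal to zero with nonzero $z$-part, hence lie in $\Lambda$; this also yields the "at most fourth order" count.
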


\begin{proof}

{\bf (i) and (ii). }
The assertions (i) and (ii) are elementary after one observes that $f_r$ is a continuous function on $\{(z,c)\in L:\,|c|<r/2\}$ and
\begin{equation}\label{e:Vrbar}
\begin{split}
\overline{V_r}&=\Bigl\{f_r(a,b,c)\leq 1,\, |c|< r/2\Bigr\}\cup\\
&\cup \Bigl\{|a|\leq \sqrt{r},\, b=0,\,c=r/2\Bigr\}\cup \Bigl\{a=0,|b|\leq \sqrt{r},\, c=-r/2\Bigr\}.
\end{split}
\end{equation}

{\bf (iii). }Note that
$$
\frac{\partial}{\partial r}f_r(a+ib,c)=-\frac{(\frac12 r-c)}{2\sqrt{r}(\frac12 r+c)^2}|a|-\frac{(\frac12 r+c)}{2\sqrt{r}(\frac12 r-c)^2}|b|<0
$$
provided $|c|<\tfrac12r$ and $|a|+|b|>0$. Now let $(a+ib,c)\in \overline{V_{r'}}$ for some $r'<r$. Then $f_{r'}(a+ib,c)\leq 1$. 
If $|a|+|b|\neq 0$ and $|c|<\tfrac12r'$, we see that the function $r\mapsto f_r(a+ib,c)$ is strictly monotonic decreasing, consequently $f_r(a+ib,c)<1$ and hence $(a+ib,c)\in V_r$. If on the other hand $|a|+|b|=0$ and $|c|\leq \tfrac12r'$, then $f_{r}(a+ib,c)=0$ so that again $(a+ib,c)\in V_r$.

\smallskip

Finally, consider the case when $(a+ib,c)\in\overline{V_{r'}}$ and $|c|=\tfrac{1}{2}r'$. If $c=\tfrac{1}{2}r'$, using \eqref{e:Vrbar} we deduce $|a|\leq \sqrt{r'}$ and $b=0$, from which it is easy to deduce that $f_r(a+ib,c)<1$ by direct calculation. Similarly if $c=-\tfrac{1}{2}r'$. In both cases we see that $(a+ib,c)\in V_r$. This concludes the proof of (iii).

\bigskip

{\bf (iv). }Let $(a+ib,c)\in V_r$. Then $|c|<r/2$ and, on the (horizontal) $c$-slice the point $(a,b)$ lies inside the rhombus defined by the equation
$$
|a|\left(\tfrac{\sqrt{r}}{2}-\tfrac{c}{\sqrt{r}}\right)+|b|\left(\tfrac{\sqrt{r}}{2}+\tfrac{c}{\sqrt{r}}\right)\leq \left(\tfrac{r}{4}-\tfrac{c^2}{r}\right).
$$
\begin{figure}[h]
\includegraphics[scale=0.8]{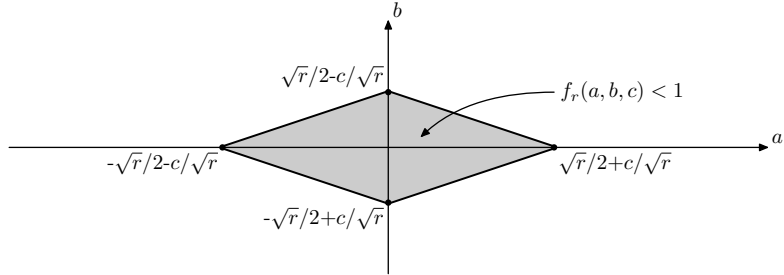}
\caption{\label{f:rhombus} The rhombus arising as a $c$-slice of $V_r$ with $0<c<r/2$.}
\end{figure}
Since any direction of the form $(\bar{a}+i\bar{b},0)$ is contained in $\Lambda$ (c.f. \eqref{e:LambdaL}), we find two points $(a_1+ib_1,c)$ and $(a_2+ib_2,c)$ on the boundary of the rhombus, so that the line segment joining the two points contains $(a+ib,c)$ and is in a $\Lambda$-direction. Therefore it suffices to show that the assertion holds for $(a+ib,c)\in \partial V_r$. 

\smallskip

Let $(a+ib,c)\in \partial V_r$. Using \eqref{e:rotation} and \eqref{e:bar} we may assume without loss of generality that $a,b\geq 0$, so that we have $|c|\leq r/2$ and 
$$
a\left(\tfrac{\sqrt{r}}{2}-\tfrac{c}{\sqrt{r}}\right)+b\left(\tfrac{\sqrt{r}}{2}+\tfrac{c}{\sqrt{r}}\right)=\left(\tfrac{r}{4}-\tfrac{c^2}{r}\right).
$$
It is easy to see that then $(a,b,c)$ lies on the (horizontal, hence $\Lambda$-) line segment connecting the two points 
$$
\Bigl(\frac{\sqrt{r}}{2}+\frac{c}{\sqrt{r}},0,c\Bigr)\quad\textrm{ and }\quad \Bigl(0,\frac{\sqrt{r}}{2}-\frac{c}{\sqrt{r}},c\Bigr)\,.
$$
Also,
\begin{align}
\Bigl(\frac{\sqrt{r}}{2}+\frac{c}{\sqrt{r}},0,c\Bigr)&\in\Biggl[(\sqrt{r},0,\frac{r}{2}),(0,0,-\frac{r}{2})\Biggr]\label{e:line3}\\
\Bigl(0,\frac{\sqrt{r}}{2}-\frac{c}{\sqrt{r}},c\Bigr)&\in\Biggl[(0,0,\frac{r}{2}),(0,\sqrt r,-\frac{r}{2})\Biggr]\label{e:line4}
\end{align}
and 
\begin{align}
(0,0,\frac{r}{2})&\in \Bigl[(-\sqrt{r},0,\frac{r}{2}),(\sqrt{r},0,\frac{r}{2})\Bigr]\label{e:line5}\\
(0,0,-\frac{r}{2})&\in \Bigl[(0,-\sqrt{r},-\frac{r}{2}),(0,\sqrt{r},-\frac{r}{2})\Bigr]\label{e:line6}
\end{align}
Using \eqref{e:LambdaL} we check that the line segments in \eqref{e:line3}-\eqref{e:line6} are in $\Lambda$-directions. 
Consequently $(a+ib,c)\in (\K_r\cap L)^{lc}$. The statement of the Proposition follows easily. 
\end{proof}

\subsection{Construction of $\U_r$}

Let $r>0$ and set
\begin{equation}\label{e:Ur}
\mathcal{V}_r=\Bigl\{(ze^{i\theta},ce^{2i\theta})\in \C\times\C:\,(z,c)\in V_r,\,0<|c|<\frac{r}{2},\,\theta\in\R\Bigr\},\quad \U_r:=\mathcal{V}_r^{lc}.
\end{equation}
Observe that, although in the definition of $\mathcal{V}_r$ we excluded the case $c=0$, because of (iii) of Proposition \ref{p:Vr} we nevertheless have $V_r\subset \U_r$. Moreover, $\mathcal{V}_r$ and $\U_r$ are easily seen to be invariant w.r.t. the maps \eqref{e:rotation}.

\begin{proposition}\label{p:Ur}
For any $r>0$ we have
\begin{enumerate}
\item[(i)] $\U_r\subset\C\times\C$ is open;
\item[(ii)] $\K_{r'}\subset\overline{\U_r}$ for all $0\leq r'\leq r$;
\item[(iii)] For every $w\in \U_r$ and every $\eps>0$ there exists $r-\eps<r'<r$ and a laminate of finite order $\nu\in\mathcal{L}(\U_r)$ such that
$\bar{\nu}=w$ and $\supp\nu\subset \K_{r'}$.
\end{enumerate}
\end{proposition}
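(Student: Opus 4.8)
The plan is to deduce all three assertions from the corresponding properties of $V_r$ established in Proposition \ref{p:Vr}, together with the rotational invariance \eqref{e:rotation} and the definition of $\mathcal{V}_r$ and $\U_r=\mathcal{V}_r^{lc}$. For (i), I would first note that $\mathcal{V}_r$ is open: it is the union over $\theta\in\R$ of the rotated copies $R_\theta(V_r\cap\{0<|c|<r/2\})$, and since $V_r$ is relatively open in $L$ while the $R_\theta$ sweep out directions transversal to $L$, the union is genuinely open in $\C\times\C$. (One should be slightly careful here: the cleanest route is to observe that the map $(z,c,\theta)\mapsto(ze^{i\theta},ce^{2i\theta})$ is a submersion onto its image away from $c=0$, so the image of the open set $\{(z,c)\in V_r:0<|c|<r/2\}\times\R$ is open.) Then openness of $\U_r=\mathcal{V}_r^{lc}$ is immediate from Lemma \ref{l:openlchull}.

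For (ii), I would use Proposition \ref{p:Vr}(ii)–(iii): for $0<r'<r$ we have $\K_{r'}\cap L\subset\overline{V_{r'}}\subset V_r$, and then rotating by \eqref{e:rotation} and using that $\K_{r'}$ is the orbit of $\K_{r'}\cap L$ under the $R_\theta$ — this is exactly the content of the coordinate description $\K_{r'}=\{(z,\zeta):|z|^2=r',\ \zeta=\tfrac12 z^2\}$, since every such $(z,\zeta)$ equals $R_\theta(\pm\sqrt{r'},\pm\tfrac12 r')$ for suitable $\theta$ — we get $\K_{r'}\subset\mathcal{V}_r$ once we check $c=\pm r'/2\neq 0$, so the excluded locus $c=0$ causes no trouble. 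Hence $\K_{r'}\subset\mathcal{V}_r\subset\U_r\subset\overline{\U_r}$ for $0<r'<r$. The endpoint cases $r'=0$ and $r'=r$ then follow by taking closures: $\K_0=\{0\}$ is a limit of $\K_{r'}$ as $r'\to 0$, and $\K_r\subset\overline{\bigcup_{r'<r}\K_{r'}}\subset\overline{\U_r}$.

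For (iii), let $w\in\U_r=\mathcal{V}_r^{lc}$. By Lemma \ref{l:openlchull} applied to the open set $\mathcal{V}_r$, there is a laminate of finite order $\nu\in\mathcal{L}(\mathcal{V}_r^{lc})=\mathcal{L}(\U_r)$ with barycenter $w$ and $\supp\nu\subset\mathcal{V}_r$. Now each atom of $\nu$ lies in $\mathcal{V}_r$, i.e. is of the form $R_\theta(z_j,c_j)$ with $(z_j,c_j)\in V_r$, $0<|c_j|<r/2$; and since $\supp\nu$ is finite I can find a single $r'<r$ with $r-\eps<r'<r$ such that every atom in fact lies in $R_\theta(\overline{V_{r'}})$ for the appropriate $\theta$ (using that $V_r=\bigcup_{r'<r}\overline{V_{r'}}$ by Proposition \ref{p:Vr}(iii), plus a compactness/finiteness argument to make $r'$ uniform). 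Then I apply the refined part of Proposition \ref{p:Vr}(iv) — every point of $\overline{V_{r'}}$ is the barycenter of a laminate of order $\le 4$ supported in $\K_{r'}\cap L$, and this property is $R_\theta$-equivariant by invariance of $\K_{r'}$ and $\Lambda$ under \eqref{e:rotation} — to split each atom $R_\theta(z_j,c_j)$ further into a laminate supported on $R_\theta(\K_{r'}\cap L)\subset\K_{r'}$. Composing $\nu$ with these splittings (a legal operation in $\mathcal{L}(\U_r)$ since all the intermediate $\Lambda$-segments stay inside $\U_r\supset\mathcal{V}_r\supset\overline{V_{r'}}$-orbits) produces a laminate of finite order with barycenter $w$ and support in $\K_{r'}$, as required.

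The main obstacle I anticipate is the bookkeeping in (iii): verifying that the $\Lambda$-segments used in the Proposition \ref{p:Vr}(iv) splitting, after being rotated by $R_\theta$ and grafted onto the atoms of $\nu$, all remain inside the open set $\U_r$ (so that the result is genuinely a laminate of finite order in $\mathcal{L}(\U_r)$ and not just in $\mathcal{L}(\overline{\U_r})$), and that a single $r'$ works for all finitely many atoms simultaneously. Both are routine given the strict inclusions $\overline{V_{r'}}\subset V_r$ of Proposition \ref{p:Vr}(iii), but they require care about the excluded slice $c=0$; the observation already noted in the text — that $V_r\subset\U_r$ despite $c=0$ being excluded from $\mathcal{V}_r$ — is what makes this go through.
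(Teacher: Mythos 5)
Your proposal is correct and follows essentially the same route as the paper: openness of $\mathcal{V}_r$ via the local diffeomorphism $(z,c,\theta)\mapsto(ze^{i\theta},ce^{2i\theta})$ away from $c=0$ plus Lemma \ref{l:openlchull}, part (ii) from Proposition \ref{p:Vr}(ii)--(iii) and rotation invariance, and part (iii) by reducing via Lemma \ref{l:openlchull} and \eqref{e:rotation} to points of $V_r$, which lie in $\overline{V_{r'}}$ for $r'$ close to $r$ and are then resolved by Proposition \ref{p:Vr}(iv), using $\overline{V_{r'}}\subset V_r\subset\U_r$ to keep all splittings inside $\U_r$. The only difference is presentational: you make explicit the choice of a single $r'$ for all finitely many atoms, which the paper leaves implicit.
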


\begin{proof}

{\bf (i). }We note that the map $(z,c,\theta)\mapsto (ze^{i\theta},ce^{2i\theta})$ is a local immersion in the set $\{(z,c,\theta):\,|c|\neq 0\}$. Since $V_r$ is (relatively) open in $L$, it follows that $\mathcal{V}_r$ is open in $\C\times\C$. Openness of $\U_r$ then follows from Lemma \ref{l:openlchull}.

\smallskip

{\bf (ii). }By the invariance w.r.t. \eqref{e:rotation} it suffices to show that $\overline{\U_r}\cap L\supset \K_{r'}\cap L$. But $\overline{\U_r}\cap L\supset \overline{V_r}$. So the claim follows from Proposition \ref{p:Vr} (ii) and (iii).

{\bf (iii). }Since the set of laminates of finite order $\mathcal{L}(\U_r)$ is closed under splitting in $\U_r=\mathcal{V}_r^{lc}$, by using Lemma \ref{l:openlchull} and the invariance w.r.t. \eqref{e:rotation} we may reduce without loss of generality to the case $w\in V_r$. Choose $r-\eps<r'<r$ such that $w\in \overline{V_{r'}}$. By Proposition \ref{p:Vr} (iv)  there exists a laminate $\nu\in \mathcal{L}(\overline{V_{r'}})$ of finite order such that $\bar{\nu}=w$ and $\supp \nu\subset \K_{r'}\cap L$. Since $r'<r$, Proposition \ref{p:Vr} (ii) and (iii) imply $V_{r'}\subset \U_r$ and hence $\nu\in \mathcal{L}(\U_r)$. The statement of the Proposition follows.

\end{proof}

\bigskip

\begin{corollary}
The set $\U_r$ defined in \eqref{e:Ur} satisfies the perturbation property (P) and also property (*). 
\end{corollary}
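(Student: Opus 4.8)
The plan is to verify the two properties separately, using Propositions~\ref{p:Vr} and \ref{p:Ur} together with the sufficient condition for (P) provided by Proposition~\ref{p:criterion}.

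First I would dispose of property (*), i.e. $\K_r\subset\U_{r'}$ for $r<r'$. By Proposition~\ref{p:Ur}(ii) we have $\K_r\subset\overline{\U_{r'}}$; what needs slightly more care is that the inclusion actually lands in the \emph{open} set $\U_{r'}$. For this I would invoke the stronger monotonicity in Proposition~\ref{p:Vr}(iii), namely $\overline{V_{r}}\subset V_{r'}$ for $r<r'$: by the rotation-invariance \eqref{e:rotation} of both $\K_r$ and $\mathcal{V}_{r'}$ it suffices to check $\K_r\cap L\subset \U_{r'}$, and since $\K_r\cap L\subset\overline{V_r}\subset V_{r'}\subset\mathcal{V}_{r'}^{lc}=\U_{r'}$ (using also the remark after \eqref{e:Ur} that $V_{r'}\subset\U_{r'}$), this is immediate. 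One should also note $\U_r\subset\K_r^{co}$, which follows because $\mathcal V_r\subset\K_r^{co}$ (points of $V_r$ satisfy $|z|^2<r$, by the rhombus bound in the proof of Proposition~\ref{p:Vr}(iv), hence lie in the convex hull \eqref{e:convexhull}, and $\K_r^{co}$ is rotation-invariant and convex, so closed under taking $\Lambda$-laminations).

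Next, property (P). By Proposition~\ref{p:criterion} it is enough to produce, for each $w_0\in\U_r$, a laminate of finite order $\nu\in\mathcal{L}(\U_r)$ with barycenter $w_0$ and $\int|w-w_0|^2\,d\nu\geq\Phi(\dist(w_0,\K_r))$ for some fixed modulus $\Phi$. Apply Proposition~\ref{p:Ur}(iii) with, say, $\eps=\tfrac12\dist(\bar v_0,\K_r)$-dependent — more precisely, choose $r'$ with $r-|\bar v_0|^2\leq r-r'\leq 2(r-|\bar v_0|^2)$ is not quite what we want; instead fix once and for all $\eps$ small and get $r'\in(r-\eps,r)$ and $\nu\in\mathcal L(\U_r)$ with $\bar\nu=w_0$, $\supp\nu\subset\K_{r'}$. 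Then every atom $w=(z,\zeta)$ of $\nu$ has $|z|^2=r'$, while $w_0=(z_0,\zeta_0)\in\K_r^{co}$ has $|z_0|^2\leq r$; one computes $\int|w-w_0|^2\,d\nu\geq \int |z-z_0|^2\,d\nu \ge \big(\int|z|^2 d\nu\big)-|z_0|^2 = r'-|z_0|^2$ by Jensen applied to the barycenter condition $\int z\,d\nu=z_0$. Choosing $r'$ close enough to $r$ — say $r'>r-\tfrac12(r-|z_0|^2)$, which is possible since Proposition~\ref{p:Ur}(iii) lets $r'$ be as close to $r$ as we like — yields $\int|w-w_0|^2\,d\nu\geq \tfrac12(r-|z_0|^2)=\tfrac12(r-|\bar v_0|^2)$. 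Finally, by \eqref{e:Psi} we have $\dist(w_0,\K_r)\leq\Psi(r-|\bar v_0|^2)$, so $r-|\bar v_0|^2\geq\Psi^{-1}(\dist(w_0,\K_r))$ and the bound becomes $\int|w-w_0|^2\,d\nu\geq\tfrac12\Psi^{-1}(\dist(w_0,\K_r))=:\Phi(\dist(w_0,\K_r))$, a continuous strictly increasing function vanishing at $0$. This is exactly the hypothesis of Proposition~\ref{p:criterion}, so $\U_r$ has property (P).

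The main obstacle I anticipate is the bookkeeping in the last step: one must be sure that the freedom in Proposition~\ref{p:Ur}(iii) to take $r'$ arbitrarily close to $r$ is uniform enough to produce a single modulus $\Phi$ valid for all $w_0\in\U_r$ — but since the estimate $\int|w-w_0|^2 d\nu\ge r'-|z_0|^2$ holds for \emph{any} admissible $r'$ and the constraint $r'>r-\tfrac12(r-|z_0|^2)$ only requires $r-r'$ smaller than a quantity that is itself controlled by $\dist(w_0,\K_r)$ via \eqref{e:Psi}, there is no loss of uniformity. A secondary point to check carefully is the inclusion $\U_r\subset\K_r^{co}$ used implicitly when quoting \eqref{e:Psi}; this is handled as indicated above via the rhombus bound and convexity of $\K_r^{co}$.
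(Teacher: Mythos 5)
Your proof is correct and follows essentially the same route as the paper: property (*) from Proposition \ref{p:Vr}(iii) together with Proposition \ref{p:Ur}(ii) and rotation invariance, and property (P) by feeding Proposition \ref{p:Ur}(iii) into Proposition \ref{p:criterion} via the identity $\int|w-w_0|^2\,d\nu\geq r'-|z_0|^2$ and the modulus \eqref{e:Psi}. One small caveat: your side-justification that $\mathcal{V}_r\subset\K_r^{co}$ (``points of $V_r$ satisfy $|z|^2<r$, hence lie in the convex hull'') is not valid as stated, since membership in \eqref{e:convexhull} constrains $u$ as well as $v$; the inclusion is nonetheless true and follows immediately from Proposition \ref{p:Vr}(iv), which gives $\overline{V_r}\subset(\K_r\cap L)^{lc}\subset\K_r^{co}$, combined with the rotation invariance of $\K_r^{co}$ and the fact that a convex set is closed under $\Lambda$-laminations, so this does not affect the argument.
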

 
\begin{proof}
Let $w_0\in \U_r$. Using Proposition \ref{p:Ur} (iii) for any $\eps>0$ there exists $r-\eps<r'<r$ and a laminate of finite order $\nu\in \mathcal{L}(\U_r)$ with barycenter $\bar{\nu}=w_0$ such that $\supp\nu\subset \K_{r'}$. Consequently, writing $w=(z,\zeta)$,
\begin{align*}
\int|w-w_0|^2\,d\nu(w)&\geq \int|z-z_0|^2\,d\nu(z,\zeta)\\&
=\int|z|^2-2\Re(z\bar{z_0})+|z_0|^2\,d\nu(z,\zeta)=r'-|z_0|^2
\end{align*}
Since $\eps>0$ is arbitrary and we have \eqref{e:Psi}, Proposition \ref{p:criterion} applies and implies property (P).

Property (*) is a direct consequence of Proposition \ref{p:Vr} (iii) and Proposition \ref{p:Ur} (ii). 

\end{proof}
 
%%%%%%%%%%%%%%%%%%%%%%%%%%%%%%%%%%%%%%
%%%%%%%%%%%%%%%%%%%%%%%%%%%%%%%%%%%%%%
 
\section{Failure of Property (P)}
\label{s:failureofP}

In this section we show that in the case $d=2$ the Property (P) fails for the interior of the convex hull of $\K_r$. In the language of compensated compactness this amounts to an additional non-trivial constraint on the relaxation - in the framework of gradient differential inclusions of the type $Du\in K$ \cite{KIRCHHEIM:habilitation,MUELLER-SVERAK,SZEKELYHIDI:SISSA-lectures} this amounts to the statement that the quasiconvex hull of $K$ is strictly smaller than the convex hull. We do not know what the (analogue of) the quasiconvex hull of $\K_r$ is in this case.

\smallskip

\begin{theorem}
Let $d=2$ and $\U_r:=\textrm{int }\K_r^{co}$. Then Property (P) is not valid.
\end{theorem}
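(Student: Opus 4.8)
The plan is to exhibit a single point $\bar w=(\bar z,\bar\zeta)\in\textrm{int }\K_r^{co}$ from which no admissible perturbation can be made, i.e.\ a point at which Property (P) must fail because any subsolution perturbation supported in $Q$ and staying inside $\textrm{int }\K_r^{co}$ is forced to vanish identically. The natural candidate, dictated by the calculation in Section~\ref{s:d2}, is a point on the boundary of the lamination-convex hull $\K_r^{lc}$ but in the interior of $\K_r^{co}$. Concretely, recalling $\K_r^{co}=\{(z,\zeta):\,v\otimes v-u\le\frac r2\,\textrm{Id}\}$ and comparing with the explicit rhombus description of $V_r$ in the proof of Proposition~\ref{p:Vr}(iv), one sees that for a fixed value of $c$ with $0<|c|<r/2$ the $c$-slice of $\K_r^{co}$ is a genuinely larger region (a full ellipse/disk, after the appropriate affine change) than the rhombus which is the $c$-slice of $\K_r^{lc}$. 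Any point $\bar w$ strictly outside all the relevant rhombi but inside $\K_r^{co}$ — using the rotation invariance \eqref{e:rotation} we may normalise to $\bar\zeta=c$ real — will do.

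**Key steps.**
First I would pin down, via the coordinates $(z,\zeta)=(a+ib,c+id)$ introduced in Section~\ref{s:d2}, the precise inequality cutting out $\K_r^{co}$ restricted to the plane $\{\zeta=c\ \textrm{real}\}$, and check that it strictly contains the rhombus $|a|(\tfrac{\sqrt r}{2}-\tfrac{c}{\sqrt r})+|b|(\tfrac{\sqrt r}{2}+\tfrac{c}{\sqrt r})\le\tfrac r4-\tfrac{c^2}{r}$. Then, for $\bar w$ chosen in the gap, I would argue that $\bar w$ cannot be the barycenter of any nontrivial laminate of finite order supported in $\textrm{int }\K_r^{co}$: the crucial point is that every $\Lambda$-segment through $\bar w$, when analysed using $\Lambda=\{(z,\zeta):\Im(z^2\bar\zeta)=0\}$, forces the segment to leave $\textrm{int }\K_r^{co}$ — or, more precisely, that the $z$-component $|z|^2$ is already extremal on $\K_r^{co}$ along every $\Lambda$-direction emanating from $\bar w$, so that no $\Lambda$-segment genuinely through $\bar w$ stays in the interior. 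To promote this from a statement about laminates to a statement about actual subsolutions, I would invoke the localization apparatus of Section~\ref{s:wavecone}: a compactly supported subsolution $w$ with $\bar w+w(x)\in\textrm{int }\K_r^{co}$ generates, via the Young measures of its oscillations, $\Lambda$-configurations based at $\bar w$; since the only such configuration is the trivial one, one concludes $w\equiv$ a constant, hence $w\equiv 0$ by compact support, so $\int_Q|w|^2\,dx=0$ while $\dist(\bar w,\K_r)>0$, contradicting (P).

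**Main obstacle.**
The delicate part is the last reduction — turning the obstruction for laminates into an obstruction for honest subsolutions. Property (P) only asserts the existence of \emph{one} perturbation with positive $L^2$-norm, so to refute it I genuinely need that \emph{no} nontrivial admissible $w$ exists at $\bar w$, not merely that laminates fail. This requires a rigidity / compensated-compactness argument: using the div-curl structure of \eqref{e:linear} and the fact that the relevant component of $w$ (essentially $|v|^2$, or rather a linear functional separating $\bar w$ from $\K_r^{co}$ in a supporting hyperplane whose normal lies outside the wave cone's annihilator) is a null Lagrangian along $\Lambda$, one shows that functional is constant on any subsolution and hence forces $w$ to sit in the codimension-one affine slice, where the argument about rhombi versus ellipses closes the loop. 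I expect the bookkeeping of which supporting hyperplane to use, and verifying that its normal is not $\Lambda$-compatible so that the associated functional is indeed weak-$L^2$-continuous, to be the technical crux; everything else is the explicit two-dimensional geometry already set up in Section~\ref{s:d2}.
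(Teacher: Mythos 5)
Your strategy cannot work as stated, because its central claim is false: there is no point $\bar w\in\textrm{int}\,\K_r^{co}$ at which every admissible perturbation is forced to vanish. The set $\textrm{int}\,\K_r^{co}$ is open and convex, and in $d=2$ the wave cone $\Lambda=\{(z,\zeta):\Im(z^2\bar\zeta)=0\}$ is a codimension-one cone containing plenty of directions; hence \emph{every} interior point lies on short $\Lambda$-segments entirely contained in $\textrm{int}\,\K_r^{co}$, and Lemma \ref{l:2states} (via Lemma \ref{l:potential}) then produces a compactly supported subsolution perturbation with strictly positive $L^2$-norm at every such point. In particular your intermediate assertions — that at a point of $\partial\K_r^{lc}\cap\textrm{int}\,\K_r^{co}$ ``no $\Lambda$-segment genuinely through $\bar w$ stays in the interior'', or that $|z|^2$ is extremal along every $\Lambda$-direction there — are incompatible with openness and convexity of $\textrm{int}\,\K_r^{co}$. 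Consequently Property (P) cannot be refuted at a single point: its failure is quantitative. (P) demands a \emph{uniform} modulus $\Phi$ relating the size of the perturbation to $\dist(\bar w,\K_r)$ over all of $\U_r$, so to disprove it one must exhibit a sequence of interior points staying at definite distance from $\K_r$ along which the maximal achievable perturbation degenerates to zero; no single interior point can do this.

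This is exactly how the paper argues, and the ingredients you list at the end (div-curl structure, $\Lambda$-affine quadratic quantities, Young measures) are the right tools but must be deployed along such a sequence rather than at one point. The paper first shows that at a boundary point $\bar w\in\partial\K_1^{co}\setminus\K_1$ (normalized as in \eqref{e:reduction}, with $\bar v_1\neq 0$) the supporting face of $\K_1^{co}$ is a single line segment whose direction \eqref{e:direction} is \emph{not} in $\Lambda$. Assuming (P), one takes interior points $\bar w^{(k)}\to\bar w$; since $\dist(\bar w^{(k)},\K_1)$ is bounded below, (P) yields perturbations $w^{(k)}$ with $\int_Q|w^{(k)}|^2\geq\delta>0$. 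Passing to the Young-measure limit $\nu$ (including the pressure), its barycenter lies on $\partial\K_1^{co}$, so $\supp\nu$ is confined to the non-$\Lambda$ line through $\bar w$; the div-curl lemma gives commutation of $\nu$ with the quadratic functions $g_1,g_2$, and a suitable combination is strictly convex on that line, so Jensen's inequality forces $\nu$ to be a Dirac mass, contradicting the lower bound $\delta$. Your proposal, by contrast, would need a rigidity statement (``every subsolution staying in $\textrm{int}\,\K_r^{co}$ near $\bar w$ is constant'') that is simply not true, so the gap is not technical bookkeeping about supporting hyperplanes but the choice of obstruction itself.
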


\begin{proof}

{\bf 1. } We will treat the case $r=1$, the general case follows easily by scaling. To start with we will analyse the boundary $\partial \K_1^{co}$. Recalling the expression for $\K_1^{co}$ from \eqref{e:convexhull} we see that if $(\bar{v},\bar{u})\in \partial\K_1^{co}\setminus\K_1$, then, after using the maps \eqref{e:rotation} in the form $\theta\mapsto R_\theta(\bar{v}\otimes \bar{v}-\bar{u})R_\theta^T$ we have
\begin{equation}\label{e:reduction}
\bar{v}\otimes \bar{v}-\bar{u}=\begin{pmatrix}1/2 & 0\\ 0&\lambda\end{pmatrix}
\end{equation}
for some $\lambda<1/2$. Let $(\tilde v,\tilde u)\in \R^2\times\S_0^2$ be a direction (e.g. normalized so that $|\tilde v|=1$) such that
$(\bar{v}+t\tilde v,\bar{u}+t\tilde u)\in  \partial\K_1^{co}$ for all $|t|<\delta$ for some $\delta>0$. This amounts to
\begin{equation}\label{e:boundary}
(\bar{v}+t\tilde v)\otimes (\bar{v}+t\tilde v)-(\bar{u}+t\tilde u)=\begin{pmatrix}1/2&0\\0 &\lambda\end{pmatrix}+tA+t^2B\leq \begin{pmatrix}1/2&0\\ 0&1/2\end{pmatrix},
\end{equation}
where 
$$
A=\tilde v\otimes\bar{v}+\bar{v}\otimes \tilde v-\tilde u,\quad B=\tilde v\otimes \tilde v.
$$
In particular we require $\textrm{diag}(0,1/2-\lambda)-tA$ to be positive semidefinite for all sufficiently small $|t|$. Expanding $t\mapsto \det(\textrm{diag}(0,1/2-\lambda)-tA)$ in a quadratic polynomial, we obtain the necessary conditions $A_{11}=A_{12}=A_{21}=0$. Then \eqref{e:boundary} reduces to
$$
0\leq \begin{pmatrix}-t^2 \tilde v_1^2& -t^2 \tilde v_1\tilde v_2\\-t^2\tilde v_1\tilde v_2& \tfrac{1}{2}-\lambda-t A_{22}-t^2\tilde v_2^2\end{pmatrix}
$$
We deduce $\tilde v_1=0$. Plugging into the definition of $A$ and using that $A=\textrm{diag}(0,A_{22})$ we finally obtain
\begin{equation}\label{e:direction}
\tilde v=\begin{pmatrix}0\\1\end{pmatrix},\quad \tilde u=\begin{pmatrix}0&\bar{v}_1\\ \bar{v}_1&0\end{pmatrix}.
\end{equation}
Therefore the boundary of $\K_1^{co}$ at $(\bar{v},\bar{u})$ consists of a single line segment in the direction $(\tilde v,\tilde u)$. Observe that $(\tilde v,\tilde u)\notin\Lambda$, unless $\overline v_1=0$.

\medskip

%%%%%%%%%%%%%%%
{\bf 2. }We now argue by contradiction. Assume that the perturbation property (P) holds and let $\bar{w}=(\bar{v},\bar{u})\in \partial\K_1^{co}\setminus\K_1$, without loss of generality satisfying \eqref{e:reduction}. Assume further that $\bar{v}_1\neq 0$ (it is easy to see that such $\bar{w}\in \partial\K_1^{co}\setminus\K_1$ exists). 

Let $\bar{w}^{(k)}=(\bar{v}^{(k)},\bar{u}^{(k)})\in \textrm{int}\,\K_1^{co}$ be a sequence such that $\bar{w}^{(k)}\to \bar{w}$. Then there exists $\delta>0$ and for each $k\in\N$ there exists a subsolution $w^{(k)}\in C_c^{\infty}(Q;\R^2\times \S^2_0)$ such that $\bar{w}^{(k)}+w^{(k)}(x)\in \K_1^{co}$ and $\int_Q|w^{(k)}(x)|^2\,dx\geq \delta$. Define the probability measures $\nu_k$ on $\R^2\times \S^2_0\times\R$ by duality using the formula
\begin{equation}\label{e:youngmeasure}
\int f(v,u,q)\,d\nu_k(v,u,q):=\int_Qf(v^{(k)}(x),u^{(k)}(x),q^{(k)}(x))\,dx\qquad\forall\,f\in C_c(\R^2\times \S^2_0\times\R),
\end{equation}
where $q^{(k)}(x)$ is the associated pressure, i.e. the solution of the equation 
\begin{align*}
\Delta q^{(k)}&=-\textrm{div }\textrm{div }u^{(k)}\textrm{ on }Q\\
q^{(k)}&=0\textrm{ on }\partial Q.
\end{align*} 
(Here one should recall that the pressure in property (P) is required to satisfy Dirichlet boundary conditions).
Note that $\supp\nu_k\subset\K_1^{co}\times\R$ for all $k\in\N$.

Using the weak* sequential compactness of the dual space $C_c(\R^2\times \S^2_0\times\R)^*$ we obtain a weakly* convergent subsequence $\nu_k\overset{*}{\rightharpoonup}\nu$. We note in passing that the probability measure $\nu$ is a stationary measure-valued (sub)solution of the Euler equations. (c.f. \cite{DiPerna}). 

Using that $(\bar{v}^{(k)},\bar{u}^{(k)})+(v^{(k)},u^{(k)})\in \K_1^{co}$, we see that the sequence $(v^{(k)},u^{(k)})$ is uniformly bounded in $L^{\infty}(Q)$. Then, from the standard $L^p$-estimate $\|q^{(k)}\|_{L^p(Q)}\leq C_p\|u^{(k)}\|_{L^p(Q)}$ for any $p<\infty$, we deduce that the sequence $q^{(k)}$ is uniformly bounded in $L^p(Q)$ for any $p<\infty$. Consequently in \eqref{e:youngmeasure} one may extend to test functions $f\in C(\R^2\times \S^2_0\times\R)$ with at most polynomial growth. On the probability measure $\nu$ we deduce
\begin{equation}\label{e:info-on-nu}
\bar{\nu}=(\bar{v},\bar{u},\bar{q}),\quad \supp\nu\subset\K_1^{co}\times\R,\quad \int|(v,u)|^2\,d\nu(v,u,q)\geq \delta
\end{equation}
for some $\bar{q}\in\R$.
Here, $\bar \nu$ denotes the mean (barycenter) of the probability measure $\nu$.
 Since $(\bar{v},\bar{u})\in \partial\K_1^{co}$, we obtain, using {\bf 1.}, that $\supp\nu \subset\mathscr{L}\times\R$, where $\mathscr{L}\subset\R^2\times\S^2_0$ is the line through $(\bar{v},\bar{u})$ in the direction given by \eqref{e:direction}.

\medskip

{\bf 3. }Observe that the sequence $(v^{(k)},u^{(k)},q^{(k)})$ satisfies 
$\partial_1v^{(k)}_1+\partial_2v^{(k)}_2=0$ and
$$
\partial_1(u^{(k)}_{11}+q^{(k)})+\partial_2u^{(k)}_{12}=0,\quad \partial_1u^{(k)}_{12}+\partial_2(q^{(k)}-u^{(k)}_{11})=0.
$$
Using the div-curl lemma and standard tools from Young measure theory we deduce that $\nu$ commutes with the functions 
$$
g_1(v,u,q):=v_1(q-u_{11})-v_2u_{12},\quad g_2(v,u,q):=(u_{11}+q)(q-u_{11})-u_{12}^2,
$$
i.e. $\int g_i\,d\nu=g_i(\bar{\nu})$ for $i=1,2$ (c.f. \cite{DiPerna}, where this is referred to as the \emph{commutativity relation}). Hence $\nu$ also commutes with 
$$
g(v,u,q):=g_2(v,u,q)-2\bar{v}_1 g_1(v,u,q),
$$
$g$ being a linear combination of $g_1$ and $g_2$. 
However, on the support of $\nu$, i.e. on $\mathscr{L}\times\R$ the function $g$ becomes
$$
(t,s)\mapsto g(\bar{v}+t\tilde v,\bar{u}+t\tilde u,\bar{q}+s)=c_0+c_1s+c_2t+s^2+\bar{v}_1^2t^2,
$$
where $c_0=\bar{q}+\bar{v}_1^2(1/2+\lambda-2\bar{q})-1/4$, $c_1=2(\bar{q}-\bar{v}_1^2)$ and $c_2=2\bar{v}_1^2\bar{v}_2$. Here we have used the identities \eqref{e:reduction} and \eqref{e:direction}. Observe that, since we have assumed $\bar{v}_1\neq 0$, the function $(t,s)\mapsto g$ is strictly convex. Consequently,  
from Jensen's inequality we deduce that $\nu=\delta_{(\bar{v},\bar{u},\bar{q})}$ is a Dirac measure. 
This contradicts \eqref{e:info-on-nu}, thus concluding the proof.

\end{proof}

\section*{Acknowledgements}
The authors would like to thank Vladim\'\i r \v Sver\'ak for initial discussions and suggesting to consider stationary solutions, and Denis Serre for pointing out some mistakes in previous versions of this manuscript. Both authors acknowledge the support of ERC Grant Agreement No.~277993.

%%%%%%%%%%%%%%%%%%%%%%%%%%%%%%%%%%%%
%%%%%%%%%%%%%%%%%%%%%%%%%%%%%%%%%%%%
%%%%%%%%%%%%%%%%%%%%%%%%%%%%%%%%%%%%
%%%%%%%%%%%%%%%%%%%%%%%%%%%%%%%%%%%%

\end{document}